\newtheorem{theorem}{Theorem}[section]
\newtheorem{corollary}{Corollary}
\newtheorem{lemma}[theorem]{Lemma}
\newtheorem{conjecture}{Conjecture}
\theoremstyle{definition}
\newtheorem{definition}[theorem]{Definition}
\newtheorem{remark}{Remark}
\newcommand{\be}{\begin{equation}}
\newcommand{\beq}{\begin{equation}}
\newcommand{\eeq}{\end{equation}}
\newcommand{\beqa}{\begin{eqnarray}}
\newcommand{\eeqa}{\end{eqnarray}}
\newcommand{\beqanl}{\begin{eqnarray*}}
\newcommand{\eeqanl}{\end{eqnarray*}}
\newcommand{\bs}{\begin{sub}}
\newcommand{\es}{\end{sub}}
\newcommand{\bsn}{\begin{subn}}
\newcommand{\esn}{\end{subn}}
\newcommand{\bea}{\begin{eqnarray}}
\newcommand{\eea}{\end{eqnarray}}
\newcommand{\bean}{\begin{eqnarray*}}
\newcommand{\eean}{\end{eqnarray*}}
\newcommand{\BA}[1]{\begin{array}{#1}}
\newcommand{\EA}{\end{array}}
\newlength{\wex}  \newlength{\hex}
\def\ga{\alpha}
            \def\gl{\lambda}
\def\squarebox#1{\hbox to #1{\hfill\vbox to #1{\vfill}}}
\newcommand{\Green}[4]{\mbox{$G^{#1}_{#2}(#3,#4)$}}
\title[Ratio limit theorems for heat kernels]
{On some Strong Ratio Limit Theorems for Heat Kernels}
\author[Martin Fraas David Krej\v{c}i\v{r}\'{\i}k and Yehuda Pinchover]{}
\subjclass{Primary: 35K08; Secondary: 35B09, 58J35, 60J60.}
\keywords{Heat kernel, large time behavior, positive solutions, ratio limit theorem.
\medskip \\
This paper coincides with that published in
Discrete Contin.\ Dynam.\ Systems A \textbf{28} (2010), 495--509,
except for Remark 4 added after the paper has appeared.
}
\email{martin.fraas@gmail.com}
\email{krejcirik@ujf.cas.cz}
\email{pincho@techunix.technion.ac.il}
\dedicatory{Dedicated to Louis Nirenberg on the occasion of his 85th
 birthday}
\begin{document}
\maketitle

\centerline{\scshape Martin Fraas}
\medskip
{\footnotesize
\centerline{Department of Physics,}
\centerline{Technion - Israel Institute of
Technology,}
\centerline{Haifa, Israel}
}
% \email{martin.fraas@gmail.com}
\medskip

\centerline{\scshape David Krej\v{c}i\v{r}\'{\i}k}

\medskip

{\footnotesize
\centerline{Department of Theoretical Physics,}
\centerline{Nuclear Physics Institute,}
\centerline{Academy
of Sciences, 25068 \v{R}e\v{z}, Czech Republic}
}
%\email{krejcirik@ujf.cas.cz}
\medskip

\centerline{\scshape Yehuda Pinchover}

\medskip
{\footnotesize
\centerline{Department of Mathematics,}
\centerline{Technion - Israel Institute of
Technology,}
\centerline{Haifa, Israel}
%\email{pincho@techunix.technion.ac.il}
}
\bigskip

%\begin{document}

\begin{abstract}
We study strong ratio limit properties of the quotients of the heat kernels
of subcritical and critical operators which are defined on a noncompact Riemannian manifold.
\end{abstract}

\section{Introduction}\label{Introduction}\label{sect1}

Let $M$ be a connected noncompact Riemannian manifold, and let
$k_P^M(x,y,t)$ be the positive minimal (Dirichlet) heat kernel
associated with the parabolic equation
\begin{equation}\label{heat}
  u_t + Pu =0
  \qquad\mbox{on}\qquad M\times(0,\infty)
  \,,
\end{equation}
where~$P$ is a second-order elliptic differential operator on~$M$.
The coefficients of~$P$ are assumed to be real but~$P$ is not
necessarily symmetric. By definition, $(x,t) \mapsto k_P^M(x,y,t)$
is the minimal positive solution of~\eqref{heat}, subject to  the
initial data $\delta_y$, the Dirac distribution at $y \in M$. We
say that the operator~$P$ is \emph{subcritical} (respectively,
\emph{critical}) in $M$ if for some $x \not = y$
\begin{equation}\label{def.critical}
  \int_0^\infty k_P^{M}(x,y,\tau)\,\mathrm{d}\tau<\infty \qquad
  \left(\mbox{respectively, } \int_0^\infty
  k_P^{M}(x,y,\tau)\,\mathrm{d}\tau=\infty\right).
\end{equation}

In this paper we are concerned with the large time behavior of the
heat kernel~$k_P^M$ with regards to the criticality versus
subcriticality property of the operator~$P$. Since for any fixed
$x,y\in M$, $x \not = y$, we have that $k_{P}^M(x,y,\cdot)\in L^1(\mathbb{R}_+)$
if and only if $P$ is subcritical, it is natural to conjecture
that \emph{under some assumptions} the heat kernel of a subcritical operator~$P_+$~in $M$ decays
(in time) faster than the heat kernel of a critical
operator~$P_0$~in $M$. More precisely, we are interested to study
the following conjecture.
\begin{conjecture}\label{conjMain}
Let $P_+$ and $P_0$ be respectively subcritical and critical
operators in $M$. Then
%(under some additional conditions)
%
\begin{equation}\label{eqconjMain}
\lim_{t\to\infty}\frac{k_{P_+}^M(x,y,t)}{k_{P_0}^M(x,y,t)}=0
\end{equation}
locally uniformly in $M\times M$.
\end{conjecture}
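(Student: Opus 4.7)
The plan is to combine the integrability dichotomy \eqref{def.critical} with a strong ratio limit property of the individual heat kernels. Fix $(x,y) \in M \times M$ and set $r(x,y,t) := k_{P_+}^M(x,y,t)/k_{P_0}^M(x,y,t)$, a positive smooth function of $t > 0$. The argument proceeds in two steps: (i) show that $L(x,y) := \lim_{t\to\infty} r(x,y,t)$ exists, and (ii) deduce from \eqref{def.critical} that $L \equiv 0$.

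Step (ii) is immediate once (i) is in hand. If $L(x,y) > 0$ for some pair $(x,y)$, there is $T < \infty$ with $k_{P_+}^M(x,y,t) \geq \tfrac12 L(x,y)\, k_{P_0}^M(x,y,t)$ for all $t \geq T$; integrating on $(T,\infty)$ gives a finite left-hand side (by subcriticality of $P_+$) and an infinite right-hand side (by criticality of $P_0$ together with the parabolic Harnack inequality, which propagates the non-integrability in \eqref{def.critical} to every pair in $M \times M$), a contradiction. Hence $L \equiv 0$ pointwise, and local uniform convergence on $M \times M$ follows because parabolic Harnack and Schauder estimates render the family $\{r(\cdot,\cdot,t)\}_{t \geq 1}$ equicontinuous on compact subsets of $M \times M$.

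The heart of the matter is step (i), and this is precisely where a strong ratio limit theorem of the kind announced in the title is required. My approach would be to invoke the known large-time factorisation for critical heat kernels (in the spirit of Pinchover): under reasonable hypotheses one has $k_{P_0}^M(x,y,t) = \varphi_0(x)\varphi_0^*(y)\,\beta(t)(1+o(1))$, where $\varphi_0, \varphi_0^*$ are the positive Agmon ground states of $P_0, P_0^*$ and $\int_0^\infty \beta(t)\,\mathrm{d}t = \infty$; and then to establish an analogous asymptotic $k_{P_+}^M(x,y,t) = \psi(x)\psi^*(y)\,\gamma(t)(1+o(1))$ for the subcritical operator with $\int_0^\infty \gamma(t)\,\mathrm{d}t < \infty$, perhaps by passing through a Doob $h$-transform against a minimal positive solution of $P_+ u = 0$ to reduce to a conservative diffusion amenable to spectral or ergodic analysis. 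The principal obstruction is exactly this subcritical factorisation: whereas critical operators possess a one-dimensional cone of positive solutions so that the spatial profile is canonical, subcritical operators possess an infinite-dimensional such cone, and both the decay rate $\gamma(t)$ and the spatial factors $\psi, \psi^*$ depend sensitively on the behaviour of $M$ at infinity and on which minimal positive solutions are selected by the semigroup at large times. When the principal eigenvalue of $P_+$ is an isolated $L^2$ eigenvalue the factorisation is immediate from spectral expansion, but in full generality additional structural hypotheses — uniform parabolic Harnack, intrinsic ultracontractivity, or geometric restrictions on $M$ — are very likely required, and these are presumably what the paper will supply.
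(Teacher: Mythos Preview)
First, a framing issue: the statement you are attacking is presented in the paper as a \emph{conjecture}, not a theorem. The paper does not prove \eqref{eqconjMain} in the stated generality; it establishes it only under additional hypotheses (symmetry plus a pointwise domination condition in Theorem~\ref{mainthmFKP}; a sign condition on $V$ plus Davies' conjecture in Theorem~\ref{thm_nonselfadj}; a semismall-perturbation hypothesis in Theorem~\ref{thm_ssp}). So there is no ``paper's proof'' to match, only partial results, and your closing paragraph is right that structural assumptions are needed.

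Your step~(ii) is correct and in fact proves more than you state: it shows, with no extra hypotheses at all, that $\liminf_{t\to\infty} r(x,y,t)=0$ for every $(x,y)$, since a positive $\liminf$ already forces $k_{P_+}\geq c\,k_{P_0}$ for large $t$ and hence $\int k_{P_+}\,\mathrm{d}t=\infty$. The whole difficulty is to upgrade this to $\limsup=0$, and here your integrability argument has no grip: a sequence $t_n\to\infty$ with $r(x,y,t_n)\to K>0$ does not by itself contradict $\int k_{P_+}<\infty$.

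The factorisation route you propose for step~(i) is essentially a strengthened form of Davies' conjecture (Conjecture~\ref{conjD}) for \emph{both} operators, and that is open in general; Remark~\ref{dim1} even records a counterexample in the discrete setting. Moreover, Theorem~\ref{mainthmhk} shows that $\mathrm{e}^{\lambda_0 t}k_P^M(x,y,t)\to 0$ whenever $P-\lambda_0$ is not positive-critical, so the asymptotic profile $\psi(x)\psi^*(y)\gamma(t)$ you want for a subcritical $P_+$ with $\lambda_0(P_+)=0$ would have to come with $\gamma(t)=o(1)$ carrying no spectral information---there is no canonical choice of $\psi,\psi^*$ and no general mechanism producing such a factorisation.

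The paper's partial proofs take a quite different line and never try to show that $\lim r$ exists. They argue by contradiction on a subsequence with $r(x_0,y_0,t_n)\to K>0$, pass to a limit using parabolic compactness together with the time-ratio Lemma~\ref{corskeleton} (which says $k_P(x,y,t+\tau)/k_P(x,y,t)\to \mathrm{e}^{-\lambda_0\tau}$), and thereby manufacture a \emph{global positive elliptic solution} $u_+\in\mathcal{C}_{P_+}(M)$ dominated pointwise by a multiple of the ground state $\varphi$ of $P_0$. The contradiction is then purely elliptic: in the symmetric case via the Liouville comparison Theorem~\ref{mainthmLt} (forcing $P_+$ critical), and in the case $P_+=P_0+V$, $V\geq 0$, via the observation that $u_0-Ku_+$ is a nontrivial positive supersolution of $P_0u=0$, impossible for a critical operator. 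In short, the paper trades your analytic question about existence of $\lim r$ for a structural question about positive solutions of the limiting elliptic equations, and it is this trade that lets the argument close under the stated extra hypotheses.
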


The relevance of this conjecture becomes clearer
if we recall the relationship of~\eqref{def.critical}
to properties of positive solutions
of the elliptic equation
\begin{equation}\label{ell}
  Pu =0
  \qquad\mbox{on}\qquad M .
\end{equation}
Denote the cone of all positive (weak) solutions of~\eqref{ell}
by $\mathcal{C}_{P}(M)$.
The {\em generalized principal eigenvalue} of~$P$ in~$M$
is defined by
\begin{equation}\label{ev}
  \gl_0=\gl_0(P,M)
  := \sup\{\gl \in \mathbb{R} \mid
  \mathcal{C}_{P-\lambda}(M)\neq \emptyset\}
  \,.
\end{equation}
Throughout this paper we always assume that
$$
  \lambda_0\geq 0
$$
(actually, as it will become clear below, it is
enough to assume that $\lambda_0>-\infty$).

Recall that if $\lambda\!<\!\lambda_0$, then $P\!-\!\lambda$ is
subcritical in $M$, and for $\lambda\leq \lambda_0$, we
have $k_{P\!-\!\lambda}^M(x,y,t)\!=\!\mathrm{e}^{\lambda t}k_P^M(x,y,t)$.
It follows that $\gl_0(P_0,M)=0$ for any critical operator~$P_0$ in~$M$, while
$\gl_0(P_+,M) \geq 0$ for any subcritical operator~$P_+$ in~$M$.

It is well known that if $P$ is subcritical in $M$, then $P$ admits a {\em positive minimal Green function} $\Green{M}{P}{x}{y}$ which is given by
\begin{equation}\label{def.subcritical}
  \Green{M}{P}{x}{y}=\int_0^\infty k_P^{M}(x,y,\tau)\,\mathrm{d}\tau.
\end{equation}
On the other hand, if $P$ is critical in $M$, then $P$ does not admit a positive minimal Green function,
but admits a distinguished {\em unique} positive solution $\varphi
\in\mathcal{C}_{P}(M)$ satisfying $\varphi(x_0)=1$, where $x_0\in
M$ is a reference point. Such a solution is called a {\em ground
state} of the operator $P$ in $M$ \cite{Agmon82,Pheat,pinsky}. A ground state is characterized by being a global positive solution of the equation $Pu=0$ on $M$ of
{\em minimal growth in a neighborhood of infinity in $M$} \cite{Agmon82}. On the other hand, if $P$ is subcritical in $M$, then for any fixed $y\in M$, the positive minimal Green function $\Green{M}{P}{\cdot}{y}$ is a positive solution of the equation $Pu=0$ on $M\setminus\{y\}$ of
minimal growth in a neighborhood of infinity in $M$.
We also note that $P$ is critical in $M$ if and only the equation $Pu=0$ on $M$
admits (up to a multiplicative constant) a {\em unique positive supersolution}. Furthermore, $P$ is
critical (respectively, subcritical) in~$M$, if and only if $P^*$
(the formal adjoint of $P$) is critical (respectively,
subcritical) in $M$. The ground state of $P^*$ is denoted by
$\varphi^*$.

A critical operator~$P$ is said to be {\em positive-critical}
in~$M$ if $\varphi^*\varphi\in L^1(M)$, and {\em null-critical} in
$M$ if $\varphi^*\varphi\not\in L^1(M)$. The large time behavior
of the heat kernel of a {\em general} elliptic operator $P$ (with
$\lambda_0\geq 0$) is governed by the following theorem.
\begin{theorem}[{\cite{Pheat,P03}}]\label{mainthmhk}
Let $x,y\in M$. Then
 \begin{equation*}
\lim_{t\to\infty} \mathrm{e}^{\lambda_0
t}k_P^{M}(x,y,t)\!=\!
  \begin{cases}
    \dfrac{\varphi(x)\varphi^*(y)}{\int_M\!
\varphi(z)\varphi^*(z)\,\mathrm{d}\mu(z)} & \text{if } P\!-\!\lambda_0
\text{ is positive-critical},
\\[3mm]
    0 & \text{otherwise}.
  \end{cases}
 \end{equation*}
Furthermore,
\begin{equation}\label{eqgreen}
\lim_{t\to\infty} \mathrm{e}^{\lambda_0
t}k_P^{M}(x,y,t)=
\lim_{\lambda\nearrow\lambda_0}(\lambda_0-\lambda)\Green{M}{P-\lambda}{x}{y}.
\end{equation}
\end{theorem}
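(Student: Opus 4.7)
The plan splits naturally into two parts: establishing the main limit, and then deducing~\eqref{eqgreen} from it. First I reduce to $\lambda_0=0$ by replacing $P$ with $P-\lambda_0$, using $k_{P-\lambda_0}^M(x,y,t)=e^{\lambda_0 t}k_P^M(x,y,t)$. Setting $s:=\lambda_0-\lambda$ and $g(t):=e^{\lambda_0 t}k_P^M(x,y,t)$, I rewrite
\begin{equation*}
(\lambda_0-\lambda)G_{P-\lambda}^M(x,y)=s\int_0^\infty e^{-st}g(t)\,\mathrm{d}t,
\end{equation*}
and invoke the classical Abelian theorem for Laplace transforms: if $\lim_{t\to\infty}g(t)$ exists, then the right-hand side converges to the same value as $s\to 0^+$. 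So~\eqref{eqgreen} follows immediately from the first assertion, and the substantive task is to identify the $t$-limit in the reduced setting $\lambda_0=0$.

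For this limit I use compactness plus uniqueness on the parabolic equation. Fix a sequence $t_n\to\infty$ and consider the translates $v_n(\xi,\eta,\tau):=k_P^M(\xi,\eta,t_n+\tau)$, which are positive solutions of $\partial_\tau v+P_\xi v=0$ on $M\times M\times(-t_n,\infty)$, and of the adjoint equation in $\eta$. The parabolic Harnack inequality provides uniform local bounds, so along a subsequence $v_n\to v$ locally uniformly, producing a nonnegative ancient solution $v$ on all of $M\times M\times\mathbb{R}$ jointly annihilated by $\partial_\tau+P_\xi$ and $-\partial_\tau+P^*_\eta$.

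The crucial structural step is to show that any such $v$ equals $c\,\varphi(\xi)\varphi^*(\eta)$ with $c\ge 0$ constant in the critical case, and vanishes identically in the subcritical case. In the critical case, for each fixed $(\eta,\tau)$ the map $\xi\mapsto v(\xi,\eta,\tau)$ is a positive ancient solution of minimal growth at infinity in $M$, hence a scalar multiple of the unique ground state $\varphi$; the analogous argument in $\eta$ produces the product form, and translation invariance in $\tau$ combined with the uniqueness of $\varphi$ and $\varphi^*$ forces $c$ to be constant. In the subcritical case, Fatou's lemma applied to $\int_{-T}^{T}v\,\mathrm{d}\tau\le\liminf_n\int_{t_n-T}^{t_n+T}k_P^M\,\mathrm{d}t$ together with the finiteness of $G_P^M(\xi,\eta)=\int_0^\infty k_P^M\,\mathrm{d}t$ forces $v\equiv 0$.

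To pin down $c$ in the critical case I test against the adjoint ground state via the conservation identity
\begin{equation*}
\int_M\varphi^*(\xi)k_P^M(\xi,\eta,t)\,\mathrm{d}\mu(\xi)=\varphi^*(\eta),
\end{equation*}
which expresses invariance of $\varphi^*$ under the Dirichlet semigroup of $P^*$. Passing to the limit along $t_n$ with $v=c\,\varphi\,\varphi^*$ yields $c\int_M\varphi\,\varphi^*\,\mathrm{d}\mu=1$, giving the announced constant in the positive-critical case and forcing $c=0$ when $\int_M\varphi\,\varphi^*\,\mathrm{d}\mu=\infty$, i.e., in the null-critical case. Since every subsequence produces the same limit, the full $t$-limit exists and takes the stated value. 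The main obstacles I foresee are (i) establishing the product-form characterization of positive ancient solutions, which calls for a parabolic-Martin-boundary argument together with the elliptic uniqueness of the ground states, and (ii) justifying the limit/integral interchange against the non-compactly-supported $\varphi^*$; both points are the substance of the analysis carried out in~\cite{Pheat,P03}.
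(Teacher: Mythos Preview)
The paper does not prove Theorem~\ref{mainthmhk}; it is quoted from \cite{Pheat,P03} as a known result, so there is no in-paper proof to compare against. Your outline is nonetheless in the spirit of those references: parabolic Harnack compactness to extract subsequential limits, identification of those limits via the structure of nonnegative ancient solutions, and the Abelian theorem to deduce \eqref{eqgreen} once the $t$-limit is in hand. Your argument in the subcritical case (tail of a convergent time-integral vanishes) is clean and correct, as is the Abelian step.

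The substantive gap is the identification step in the critical case. You assert that for fixed $(\eta,\tau)$ the map $\xi\mapsto v(\xi,\eta,\tau)$ is ``of minimal growth at infinity, hence a scalar multiple of $\varphi$''; as written this conflates a time-slice of a parabolic solution with an elliptic one, and the minimal-growth property of $k_P^M(\cdot,y,t)$ at finite $t$ does not obviously survive the limit $t\to\infty$. What is actually required is a characterization of nonnegative ancient solutions of $u_\tau+Pu=0$ on $M\times\mathbb{R}$, together with a proof that the limit is time-independent---essentially a parabolic Martin boundary theorem. This is exactly the content of \cite{Pheat} for the subcritical and positive-critical regimes, and of \cite{P03} for the null-critical regime (which remained open between the two papers). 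You flag this as obstacle~(i), which is fair, but note that your Fatou argument for the null-critical case already presupposes the product form $v=c\,\varphi\varphi^*$; without it one cannot conclude $c=0$ from $\int_M\varphi\varphi^*\,\mathrm{d}\mu=\infty$. So the identification step carries essentially the full weight of the theorem, and the proposal is an accurate roadmap rather than a proof.
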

As a consequence of this theorem, we see that $\lim_{t\to\infty}
\mathrm{e}^{\lambda_0 t}k_P^{M}(x,y,t)$ always exists. On the other hand, heat kernels might have slow decay (see for example \cite{BCG} and the references therein). Therefore, it is natural to ask how fast versus slow this limit is approached, and in particular,
to examine the validity of Conjecture~\ref{conjMain}. We note that
Theorem~\ref{mainthmhk} implies that Conjecture~\ref{conjMain}
obviously holds true if~$P_0$ is positive-critical.

In \cite[Theorems~4.2 and 4.4]{M_large_time} M.~Murata
obtained the exact asymptotic for the heat kernels of
nonnegative Schr\"odinger operators with {\em short-range} (real)
potentials defined on~$\mathbb{R}^d$, $d\geq 1$.
These results imply that Conjecture~\ref{conjMain}
holds true for such operators.

The aim of the present paper is to discuss Conjecture~\ref{conjMain}
and closely related problems in the {\em general} case,
and to obtain some results under minimal assumptions.

Our study is motivated by a recent paper
\cite{KZ} by D.~Krej\v{c}i\v{r}\'{\i}k and E.~Zuazua,
where it is conjectured that for selfadjoint subcritical and critical operators $P_+$ and $P_0$, respectively, defined on $L^2(M,\mathrm{d}x)$ one has
\begin{equation}\label{KZqconj}
\lim_{t\to\infty}\frac{\|\mathrm{e}^{-P_+t}\|_{L^2(M,W\,\mathrm{d}x)
\to L^2(M,\mathrm{d}x)}}
{\|\mathrm{e}^{-P_0t}\|_{L^2(M,W\,\mathrm{d}x)\to L^2(M,\mathrm{d}x)}} =0
\end{equation}
for some positive weight function $W$. In fact, the above conjecture is proved in
\cite{KZ} for the Dirichlet Laplacian defined on a special class of quasi-cylindrical domains.

It turns out that Conjecture~\ref{conjMain} is related to the
following conjecture raised by E.~B.~Davies \cite{D} in the
self-adjoint case.
\begin{conjecture}[Davies' Conjecture]\label{conjD}
Let $Lu=u_t+P(x, \partial_x)u$ be a parabolic operator which is
defined on a noncompact Riemannian manifold $M$. Fix reference
points $x_0, y_0\in M$. Then
\begin{equation}\label{eqconjD}
\lim_{t\to\infty}\frac{k_P^M(x,y,t)}{k_P^M(x_0,y_0,t)}=a(x,y)
\end{equation}
exists and is positive for all $x,y\in M$, Moreover, for any fixed $y\in M$ we have $a(\cdot,y)\in \mathcal{C}_{P-\gl_0}(M)$.
Similarly, for a fixed  $x\in M$ we have  $a(x,\cdot)\in \mathcal{C}_{P^*-\gl_0}(M)$ (see also \cite{PDavies} and the references therein).
\end{conjecture}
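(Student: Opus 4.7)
The plan is to establish the limit by a compactness-plus-identification argument, first reducing to the case $\gl_0=0$ via $P\mapsto P-\gl_0$, which multiplies both numerator and denominator of~\eqref{eqconjD} by $\mathrm{e}^{\gl_0 t}$ and so does not affect the statement. When $P$ is positive-critical, Theorem~\ref{mainthmhk} already gives $a(x,y)=\varphi(x)\varphi^*(y)/[\varphi(x_0)\varphi^*(y_0)]$; the nontrivial regime is therefore the one in which $\mathrm{e}^{\gl_0 t}k_P^M(x_0,y_0,t)\to 0$, where the limit comes from a ratio of two quantities tending to zero.

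First I would use the parabolic Harnack inequality in the $x$ variable for $P$ and in $y$ for $P^*$ (via the duality $k_P^M(x,y,t)=k_{P^*}^M(y,x,t)$) to bound $f_t(x,y):=k_P^M(x,y,t)/k_P^M(x_0,y_0,t)$ above and below by positive constants on each compact $K\subset M\times M$, uniformly for $t\ge 1$. Interior parabolic H\"older estimates supply equicontinuity, so Arzel\`a--Ascoli extracts from any $t_n\to\infty$ a subsequence along which $f_{t_n}$ converges locally uniformly to a positive continuous $a(x,y)$.

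To show that each subsequential limit is a positive solution of $(P-\gl_0)u=0$ in $x$, I would promote the argument to the time-shifted family $h_n(x,\sigma):=k_P^M(x,y,t_n+\sigma)/k_P^M(x_0,y_0,t_n)$, which satisfies $\partial_\sigma h_n+P_xh_n=0$ on $M\times(-t_n,\infty)$. A further diagonal extraction, together with the limit $g(\sigma)$ of the scalar ratios $g_n(\sigma):=k_P^M(x_0,y_0,t_n+\sigma)/k_P^M(x_0,y_0,t_n)$ (with $g(0)=1$), produces an ancient positive parabolic solution $h(x,\sigma)=a(x,y)\,g(\sigma)$. Separating variables in the parabolic equation forces $g(\sigma)=\mathrm{e}^{-\gl\sigma}$ and $Pa(\cdot,y)=\gl\,a(\cdot,y)$ for some $\gl\le\gl_0$. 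To pin down $\gl=\gl_0$ I would invoke the identification of $\gl_0$ as the exponential decay rate $-\lim_{t\to\infty} t^{-1}\log k_P^M(x_0,y_0,t)$, ensuring that the slopes of the logarithms of the approximating ratios are compatible only with $\gl=\gl_0$. The same reasoning with the roles of $x$ and $y$ swapped gives $P^*a(x,\cdot)=\gl_0 a(x,\cdot)$.

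The main obstacle will be \emph{uniqueness} of the subsequential limit, which amounts to the triviality of a parabolic Martin boundary of $P-\gl_0$ at $t=+\infty$. When the cone $\mathcal{C}_{P-\gl_0}(M)$ is one-dimensional (as in the positive-critical case, where the ground state is unique up to scaling, and in the short-range Schr\"odinger setting treated by Murata), uniqueness is automatic from the previous step after normalization at $x_0$ in $x$ and at $y_0$ in $y$. Without such a one-dimensionality hypothesis I do not expect a direct proof; natural auxiliary inputs would be structural assumptions on $(M,P)$ (self-adjointness with a spectral gap, a short-range perturbation of a model operator, or a geometric invariance permitting a ground-state transformation to a symmetric Markov semigroup), or a probabilistic argument based on the convergence of the diffusion associated with $P$ conditioned on non-explosion.
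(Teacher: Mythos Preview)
The statement you are attempting to prove is Conjecture~\ref{conjD}, and the paper does \emph{not} prove it; it is stated as an open conjecture. There is therefore no ``paper's own proof'' to compare against. What the paper does provide is Remark~\ref{dim1}, which records the known special cases (positive-critical operators via Theorem~\ref{mainthmhk}; symmetric operators with $\dim\mathcal{C}_{P-\lambda_0}(M)=1$), and a remark that G.~Kozma has announced a counterexample on graphs. So the conjecture is not expected to hold in full generality.

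Your outline is in fact the standard partial-progress argument, and it matches closely what the paper uses elsewhere: the compactness step is precisely Remark~\ref{6Rem21}, the time-shift/separation-of-variables step is the mechanism behind Lemma~\ref{corskeleton}, and the identification of the exponent via $-\lim_{t\to\infty}t^{-1}\log k_P^M(x_0,y_0,t)=\lambda_0$ is the content of Remark~\ref{Remarkn}. You have correctly isolated the genuine obstruction: uniqueness of the subsequential limit. Your admission that ``without such a one-dimensionality hypothesis I do not expect a direct proof'' is exactly right, and is the reason the statement remains a conjecture. The auxiliary hypotheses you list (symmetry plus one-dimensional cone, short-range perturbations, etc.) are essentially those under which the result is known and are consistent with Remark~\ref{dim1}.

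In summary: there is no error in your reasoning, but you should recognize that what you have written is an explanation of \emph{why} Davies' conjecture is hard, not a proof of it. The compactness and identification steps go through as you describe; the uniqueness step does not, in general, and the paper does not claim otherwise.
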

\begin{remark}\label{dim1}
Obviously, Davies' Conjecture holds if $P$ is positive-critical.
Moreover, it holds true in the symmetric case
(for a precise definition of $P$ being symmetric
see Section~\ref{sec_preliminar})
if $\dim \mathcal{C}_{P}(M) = 1$ \cite[Corollary 2.7]{ABJ}.
In particular, it holds true for a critical symmetric operator.
For a probabilistic interpretation of Conjecture~\ref{conjD}, see \cite{ABJ}.

On the other hand, G.~Kozma announced \cite{Kozma}
that he constructed a graph $G$ such that for some two vertices $x,y \in G$
the sequence $\{k(x,x,n)/k(y,y,n)\}_{n=1}^\infty$
of the ratio of the corresponding heat kernel does not converge as $n\to \infty$.
\end{remark}

The organization of this paper is as follows. In the following
section, we give a precise definition of the operator~$P$ in~$M$
and introduce the necessary background to study
Conjecture~\ref{conjMain}.
In Section~\ref{secmain}, we
prove (under some additional assumptions) Conjecture~\ref{conjMain} in the symmetric case
(Theorem~\ref{mainthmFKP}).
In particular, Theorem~\ref{mainthmFKP} provides an affirmative answer to
the conjecture in the case of positive
perturbations (Corollary~\ref{nonnegpert}).
The relationship between Davies' conjecture and Conjecture~\ref{conjMain}
is examined for nonsymmetric operators in Section~\ref{secDavies}.
Two regimes are considered:
positive perturbations (Theorem~\ref{thm_nonselfadj})
and semismall perturbations (Theorem~\ref{thm_ssp}).
We conclude the paper in Section~\ref{secequiv}, where we ask a
general question concerning the equivalence of heat kernels on Riemannian manifolds and
provide sufficient conditions for the validity of a principal
hypothesis of theorems~\ref{thmlpluspos}, \ref{mainthmFKP}, and~\ref{thm_ssp}.

%%%%%%%%%%%%%%%%%%%%%%%%%%%%%%%%%%%%%%%%%%
\section{Preliminaries}\label{sec_preliminar}
%%%%%%%%%%%%%%%%%%%%%%%%%%%%%%%%%%%%%%%%%%
%
Let $M$ be a smooth connected noncompact Riemannian manifold of
dimension~$d$. We recall the definition of a weighted manifold
associated with~$M$. Denote by $\mathrm{d}x$ the Riemannian
density on~$M$. The divergence and gradient with respect to the
Riemannian  metric on~$M$ are denoted by $\mathrm{div}$ and
$\nabla$, respectively. Let~$m$ be a positive measurable function
on~$M$ such that~$m$ and~$m^{-1}$ are bounded on any compact
subset of~$M$. Set $\mathrm{d}\mu:=m \mathrm{d}x$. The couple
$(M,\mathrm{d}\mu)$ is called a \emph{weighted manifold} over which we
consider the Lebesgue spaces $L^p(M,\mathrm{d}\mu)$.

We associate to $M$ {\em an exhaustion}, i.e. a sequence of smooth,
relatively compact domains $\{M_{j}\}_{j=1}^{\infty}$ such that
$M_1\neq \emptyset$, $\overline{M}_{j}\subset M_{j+1}$ and
$\cup_{j=1}^{\infty}M_{j}=\Omega$. For every $j\geq 1$, we denote
$M_{j}^*:=\Omega\setminus \overline{M_j}$.

We consider a second-order elliptic differential
operator~$P$ which is defined on $(M,d\mu)$ by
\begin{equation}\label{P}
Pu:=-m^{-1} \mathrm{div}(mA\nabla u -muC) -\langle B,\nabla u\rangle + Du,
\end{equation}
where  $D$ is a real-valued measurable function on $M$,
$B$ and $C$ are measurable vector fields on~$M$,
and $A$ is a symmetric locally bounded measurable section on~$M$
of $\mathrm{End}(TM)$ such that $P$ is locally uniformly elliptic on $M$.
Here $T_xM$, $TM$, $\mathrm{End}(T_xM)$ and $\mathrm{End}(TM)$ denote the tangent space
to $M$ at $x\in M$, the tangent bundle, the endomorphisms on $T_xM$
and the corresponding bundle, respectively.
The inner product and the induced norm~on $TM$
is denoted by $\langle \cdot,\cdot \rangle$ and $|\cdot|$, respectively.
We assume that   $D, |B|^2,|C|^2 \in L^p_{\mathrm{loc}}(M,\mathrm{d}\mu)$
for some $p >\max\{n/2,1\}$.

We say that~$P$ is \emph{symmetric} if $B=C=0$ on $M$. So, in the symmetric case $P$ has the form
\begin{equation}\label{PS}
Pu=-m^{-1} \mathrm{div}(mA\nabla u) +Du.
\end{equation}

 The reason for
this terminology is that the minimal operator constructed from the formal differential operator~\eqref{PS}, \emph{i.e.}\
the restriction of~$P$ to $C_0^\infty(M)$, is symmetric in
$L^2(M,\mathrm{d}\mu)$. The Friedrichs extension of the minimal
operator defines a self-adjoint operator in
$L^2(M,\mathrm{d}\mu)$; it acts weakly as~\eqref{PS} and satisfies
Dirichlet boundary conditions on~$\partial M$ in a generalized
sense. By definition, it is the operator associated with the
closure of the quadratic form~$Q$ in $L^2(M,\mathrm{d}\mu)$
defined by
\begin{equation}\label{Q}
Q[u]:= \int_M \left(\langle A\nabla u, \nabla u \rangle  +
D|u|^2\right) \mathrm{d}\mu  \qquad u \in  C_0^\infty(M)
\,.
\end{equation}
It is well known that for such operators we have
$$\gl_0= \inf\left\{
Q[u] \, \Big| \ u\in C_0^\infty(M),\; \int_M |u|^2\,
\mathrm{d}\mu = 1 \right\} \,,
$$
where~$\lambda_0$ is the generalized principal eigenvalue of~$P$
introduced in~\eqref{ev}. In other words, $\gl_0$~equals to the
bottom of the spectrum of the Friedrichs extension if $P$~is symmetric.
\begin{remark}\label{6Rem21}
Let $t_n\to \infty$. By a standard parabolic argument, we may
extract a subsequence $\{t_{n_k}\}$ such that for every $x,y\in
M$ and $s<0$
\begin{equation}\label{eqsequence}
a(x,y,s):=\lim_{k\to\infty}\frac{k_P^M(x,y,s+t_{n_k})}{k_P^M(x_0,y_0,t_{n_k})}
 \end{equation}
 exists. Moreover, $a(\cdot,y,\cdot)\in
\mathcal{H}_P(M\times \mathbb{R}_-)$, where $\mathcal{H}_P(M\times (a,b))$ denotes the cone of all
nonnegative solutions of the equation~\eqref{heat} in $M\times (a,b)$.
Note that in the
selfadjoint case, the above is valid for all $s\in \mathbb{R}$ \cite{PDavies}.
\end{remark}

Now we recall some auxiliary results which we will need in the sequel.
First, we mention convexity properties of heat kernels.
\begin{lemma}\label{lem_convex}
Consider the following one-parameter family of elliptic operators
$$P_\ga:= P+ \ga V\qquad 0\leq \ga\leq 1,$$
where $V$ is a nonzero potential. Assume that $\gl_0(P_\ga,M)\geq 0$ for $\ga=0,1$. Then $\gl_0(P_\ga,M)\geq 0$ for $0\leq \ga\leq 1$, and the corresponding heat kernels satisfy the inequality
\begin{equation}\label{eqkconvex}
k_{P_\ga}^M(x,y,t)\leq [k_{P_0}^M(x,y,t)]^{1-\ga}[k_{P_1}^M(x,y,t)]^\ga \quad \forall\, x,y\in M,\; t>0,\;  0\leq \ga\leq 1.
\end{equation}
Moreover, $P_\ga$ is subcritical for any $0<\ga<1$.
\end{lemma}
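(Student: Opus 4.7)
The statement has three parts: (i) $\lambda_0(P_\alpha,M)\ge 0$ for all $\alpha\in[0,1]$, (ii) the heat-kernel inequality~\eqref{eqkconvex}, and (iii) subcriticality of $P_\alpha$ for $0<\alpha<1$. The plan is to prove (ii) first, then derive (i) as a corollary, and handle (iii) separately by a strict-supersolution argument, since (ii) alone does not force subcriticality when both endpoints are critical (both $k_{P_0}^M(x,y,\cdot)$ and $k_{P_1}^M(x,y,\cdot)$ can fail to be time-integrable).

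For (ii) I would use the Feynman--Kac representation of the heat semigroup. Writing $X_t$ for the diffusion generated by $P$ and denoting by $\mathbb{E}^{x,y,t}$ the normalized bridge expectation from $x$ to $y$ of duration $t$, one has
\begin{equation*}
\frac{k_{P_\alpha}^M(x,y,t)}{k_{P}^M(x,y,t)}=\mathbb{E}^{x,y,t}\!\left[\exp\!\Big(-\alpha\!\int_0^t V(X_s)\,\mathrm{d}s\Big)\right].
\end{equation*}
Jensen's inequality for the concave map $u\mapsto u^\alpha$ on $\mathbb{R}_+$ yields $\mathbb{E}[(e^{-\int V})^\alpha]\le(\mathbb{E}[e^{-\int V}])^\alpha$, which upon identifying the right-hand side with $k_{P_1}^M/k_{P_0}^M$ is exactly~\eqref{eqkconvex}. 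For (i), I would invoke the standard long-time identity $\lambda_0(P,M)=-\lim_{t\to\infty}t^{-1}\log k_P^M(x,y,t)$: applying $-t^{-1}\log(\cdot)$ to~\eqref{eqkconvex} and letting $t\to\infty$ delivers the concavity $\lambda_0(P_\alpha,M)\ge(1-\alpha)\lambda_0(P_0,M)+\alpha\lambda_0(P_1,M)\ge 0$.

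For (iii) I would build a \emph{strict} positive supersolution of $P_\alpha$. Let $u_0,u_1>0$ be positive solutions of $P_iu=0$ (available since $\lambda_0(P_i,M)\ge 0$), and set $u_\alpha:=u_0^{1-\alpha}u_1^{\alpha}$, equivalently $w_\alpha=(1-\alpha)w_0+\alpha w_1$ with $w_i:=\log u_i$. Performing the logarithmic substitution $u=e^w$ in~\eqref{P} turns $(Pu)/u$ into an expression that is affine in $w$ together with the concave term $-\langle A\nabla w,\nabla w\rangle$, so the move from $P_0$ to $P_\alpha$ only shifts the potential by $\alpha V$. Convexity of the quadratic form $\xi\mapsto\langle A\xi,\xi\rangle$ (positive semidefinite by ellipticity) then yields $P_\alpha u_\alpha\ge 0$. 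Equality at a point forces $\nabla w_0=\nabla w_1$ there; were it to hold on all of $M$, we would have $u_1=cu_0$ for a positive constant $c$, and then $P_0u_0=P_1u_1=0$ would imply $cVu_0\equiv 0$, contradicting $V\not\equiv 0$. Hence $u_\alpha$ is a strict positive supersolution of $P_\alpha$, which forces $P_\alpha$ to be subcritical (a critical operator admits only multiples of its ground state as positive supersolutions).

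The principal obstacle is step (ii) in the fully non-symmetric generality of~\eqref{P}: the drift fields $B$ and $C$ produce a non-self-adjoint semigroup whose associated diffusion and bridge measure require a careful probabilistic construction -- or, alternatively, a Trotter--Kato-based PDE interpolation -- before Jensen's inequality can be invoked cleanly. Once~\eqref{eqkconvex} is secured, steps (i) and (iii) are comparatively routine.
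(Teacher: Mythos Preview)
Your proposal is correct and coincides with the paper's approach: the paper does not give a self-contained proof but refers to \cite{P90} and notes that \eqref{eqkconvex} is obtained by applying H\"older's inequality to the Feynman--Kac formula. Your Jensen step for the concave map $u\mapsto u^{\alpha}$ is exactly that H\"older inequality in disguise, and your multiplicative construction $u_\alpha=u_0^{1-\alpha}u_1^{\alpha}$ for part~(iii) is precisely the strict-supersolution argument carried out in \cite{P90}. One minor streamlining: part~(i) already follows from the mere existence of the positive supersolution $u_\alpha$ (this is the Allegretto--Piepenbrink direction), so you need not invoke the long-time identity $\lambda_0=-\lim_{t\to\infty}t^{-1}\log k_P^M$, which the paper itself only records as a post-publication addendum (Remark~\ref{Remarkn}).
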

For a proof of the lemma see \cite{P90}. In particular, \eqref{eqkconvex} is proved by applying H\"older's inequality to the Feynmann-Kac formula (see e.g., \cite[Lemma~B.7.7]{Simon82}).

 We also need the following key lemma
\begin{lemma}\label{corskeleton}
Assume that $\lambda_0(P,M)\geq 0$, and that either $P$ is symmetric or that Davies' conjecture holds for $P$ in $M$.   Then for any fixed  $x, y \in M$ we have
\begin{equation}\label{eqconjDw9}
\lim_{t\to\infty}\frac{k_P^M(x,y,\tau+t)}{k_P^M(x,y,t)}=\mathrm{e}^{-\lambda_0 \tau}
\qquad \forall  \tau\in \mathbb{R}_-.
\end{equation}
\end{lemma}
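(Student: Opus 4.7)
The plan is to show that every subsequential limit of the ratio $k_P^M(x,y,\tau+t)/k_P^M(x,y,t)$ equals $e^{-\lambda_0\tau}$, so that the full limit exists. First I would replace $P$ by $Q:=P-\lambda_0$, using $k_Q^M(x,y,t)=e^{\lambda_0 t}k_P^M(x,y,t)$, to reduce the statement to $\lim_{t\to\infty}k_Q^M(x,y,t+\tau)/k_Q^M(x,y,t)=1$ for $\tau\le 0$ under $\lambda_0(Q,M)=0$. By Theorem~\ref{mainthmhk}, if $Q$ is positive-critical then $k_Q^M(x,y,t)$ tends to a strictly positive limit and the ratio tends trivially to $1$, so henceforth I may assume $k_Q^M(x,y,t)\to 0$.

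Given an arbitrary sequence $t_n\to\infty$, Remark~\ref{6Rem21} provides a subsequence $t_{n_k}$ along which
\[
a(z,w,s):=\lim_{k\to\infty}\frac{k_Q^M(z,w,s+t_{n_k})}{k_Q^M(x_0,y_0,t_{n_k})}
\]
exists for $s\le 0$ (and for every $s\in\mathbb{R}$ in the symmetric case), with $(z,s)\mapsto a(z,w,s)$ a nonnegative solution of $u_s+Qu=0$. Dividing numerator and denominator by $k_Q^M(x_0,y_0,t_{n_k})$,
\[
\frac{k_Q^M(x,y,\tau+t_{n_k})}{k_Q^M(x,y,t_{n_k})} \longrightarrow \frac{a(x,y,\tau)}{a(x,y,0)},
\]
and the task reduces to proving $a(x,y,\tau)=a(x,y,0)$ independently of the chosen subsequence.

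In the Davies case, $k_Q^M(\cdot,\cdot,t)/k_Q^M(x_0,y_0,t)$ converges (without passing to a subsequence) to a positive function $a(x,y)$, and Davies' conjecture furnishes $a(\cdot,y)\in\mathcal{C}_Q(M)$, i.e.\ $Q\,a(\cdot,y)=0$. Splitting
\[
\frac{k_Q^M(x,y,s+t_{n_k})}{k_Q^M(x_0,y_0,t_{n_k})}=\frac{k_Q^M(x,y,s+t_{n_k})}{k_Q^M(x_0,y_0,s+t_{n_k})}\cdot\frac{k_Q^M(x_0,y_0,s+t_{n_k})}{k_Q^M(x_0,y_0,t_{n_k})}
\]
and taking $k\to\infty$, the first factor tends to $a(x,y)$ and the second to some $b(s)$, giving the factorization $a(x,y,s)=a(x,y)\,b(s)$. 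Substituting this product into $\partial_s a+Q_x a=0$ and using $Q\,a(\cdot,y)=0$ yields $a(x,y)\,b'(s)=0$, forcing $b\equiv b(0)=1$ and thus $a(x,y,s)\equiv a(x,y,0)$. In the symmetric case, the extension of $a$ to $s\in\mathbb{R}$ provided by Remark~\ref{6Rem21} lets me pass to the limit in the semigroup identity $k_Q^M(x,y,s+t_{n_k})=\int_M k_Q^M(x,z,s)\,k_Q^M(z,y,t_{n_k})\,\mathrm{d}\mu(z)$ for $s>0$, yielding $a(x,y,s)=(T_s a(\cdot,y,0))(x)$; combined with the analogous backward identity and the symmetry $k_Q^M(x,z,t)=k_Q^M(z,x,t)$, one identifies $a(\cdot,y,0)$ as a positive solution of $Qu=0$ on which the semigroup acts trivially, and concludes $a(x,y,s)\equiv a(x,y,0)$ again. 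The principal obstacle is exactly this identification in the symmetric regime, where Davies' conjecture is not available \emph{a priori} and one must argue directly from the ancient parabolic profile. Once $a(x,y,\tau)=a(x,y,0)$ is secured in both regimes, all subsequential limits coincide, the full limit exists, and translating back by $e^{-\lambda_0\tau}$ delivers the claim.
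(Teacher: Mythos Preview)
Your treatment of the Davies case is correct and essentially coincides with the paper's argument: factor the normalized kernel as a product of a spatial limit (a positive $(P-\lambda_0)$-harmonic function) and a purely temporal factor $f(\tau)$, observe that the product solves the parabolic equation, and conclude $f'+\lambda_0 f=0$, $f(0)=1$. Your preliminary reduction to $\lambda_0=0$ is harmless.

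The symmetric case, however, has a genuine gap. First, passing to the limit in the Chapman--Kolmogorov identity
\[
\frac{k_Q^M(x,y,s+t_{n_k})}{k_Q^M(x_0,y_0,t_{n_k})}=\int_M k_Q^M(x,z,s)\,\frac{k_Q^M(z,y,t_{n_k})}{k_Q^M(x_0,y_0,t_{n_k})}\,\mathrm{d}\mu(z)
\]
requires an integrable majorant for the integrand uniformly in $k$, which you do not have on a noncompact manifold; local uniform convergence from Remark~\ref{6Rem21} is not enough. Second, even if one grants $a(\cdot,y,s)=T_s\,a(\cdot,y,0)$, the argument that ``combined with the analogous backward identity and symmetry, one identifies $a(\cdot,y,0)$ as a positive solution of $Qu=0$ on which the semigroup acts trivially'' is not a proof: you are assuming precisely what is to be shown, since time-independence of $a$ is equivalent to $a(\cdot,y,0)$ being $Q$-harmonic. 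You yourself flag this as the ``principal obstacle'' but do not overcome it.

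The paper bypasses this entirely. In the symmetric case it invokes the classical fact that $t\mapsto k_P^M(x,x,t)$ is log-convex (an immediate consequence of the spectral representation and H\"older's inequality), so that the on-diagonal ratio $k_P^M(x,x,t+\tau)/k_P^M(x,x,t)$ is monotone in $t$ and its limit is identified as $e^{-\lambda_0\tau}$; a polarization argument then extends this to off-diagonal points $x\neq y$ (see \cite{CMS,D}). This is both shorter and avoids any compactness or limit-interchange issues. I recommend you replace your symmetric-case argument with this log-convexity route.
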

\begin{proof}
If $P$ is symmetric, then the function $t\mapsto
k_P^M(x,x,t)$ is log-convex, and therefore the lemma follows by a
polarization argument (see for example \cite{CMS,D}).

Suppose now that Davies' conjecture holds for $P$ in $M$. Then as in the proof of \cite[Theorem 3.1]{PDavies}, fix $y\in M$ and let $\{t_n\}$ be a sequence such that $t_n\to \infty$. Consider the sequence
$\{\frac{k_P^M(x,y,\tau+t_n)}{k_P^M(y,y,t_n)}\}$
that converges (up to a subsequence) to a nonnegative solution $K_P^M(x,\tau)\in \mathcal{H}_P(M\times \mathbb{R}_-)$ (see Remark~\ref{6Rem21}). By
our assumption, for any $\tau$ we have
$$\lim_{n\to\infty}\frac{k_P^M(x,y,\tau+t_n)}{k_P^M(y,y,\tau+t_n)}=
\lim_{s\to\infty}\frac{k_P^M(x,y,s)}{k_P^M(y,y,s)}=\frac{a(x,y)}{a(y,y)}=:
b(x)>0,$$ where $b\in \mathcal{C}_{P-\lambda_0}(M)$, and $b$ does not depend on the sequence $\{t_n\}$.

On the other hand,
$$\lim_{n\to\infty}\frac{k_P^M(y,y,\tau+t_n)}{k_P^M(y,y,t_n)}=
K_P^M(y,\tau)=:f(\tau).$$
 Since
$$\frac{k_P^M(x,y,\tau+t_n)}{k_P^M(y,y,t_n)}=
\frac{k_P^M(x,y,\tau+t_n)}{k_P^M(y,y,\tau+t_n)}\cdot
\frac{k_P^M(y,y,\tau+t_n)}{k_P^M(y,y,t_n)},$$ we
have
%%%%%%%%%%%%%%
$$K_P^M(x,\tau)=b(x)f(\tau).$$
 Since $K_P^M(x,\tau)$ solves the parabolic equation $u_\tau + Pu =0$ in $M\times \mathbb{R}_-$, and $b\in \mathcal{C}_{P-\lambda_0}(M)$, it follows that $f$ solves the initial value problem (backwards in time)
$$\qquad f'+\lambda_0 f=0 \quad \mbox{ on } \;\; \mathbb{R}_-,\;\; f(0)=1.$$
 In particular, $f$ does not depend on the sequence
$\{t_n\}$. Thus, $$ \lim_{t\to \infty}
\frac{k_P^M(y,y,\tau+t)}{k_P^M(y,y,t)}=f(\tau)=\mathrm{e}^{-\lambda_0
\tau}.$$
Finally,
\begin{equation*}
 \begin{split}
\lim_{t\to \infty}\frac{k_P^M(x,y,\tau+t)}{k_P^M(x,y,t)}=\lim_{t\to \infty}
\frac{k_P^M(y,y,\tau+t)}{k_P^M(y,y,t)}\cdot
\frac{k_P^M(x,y,\tau+t)}{k_P^M(y,y,\tau+t)}\cdot
\frac{k_P^M(y,y,t)}{k_P^M(x,y,t)}\\[3mm]
=\mathrm{e}^{-\lambda_0
\tau}\cdot b(x)\cdot (b(x))^{-1}=\mathrm{e}^{-\lambda_0
\tau}.
 \end{split}
\end{equation*}
\end{proof}
%%%%%%%%%%%%%%%%%%%%
 It turns out that Lemma~\ref{corskeleton} implies that the case $\lambda_0(P_+,M)> 0$ is easier than the case  $\lambda_0(P_+,M)=0$.  Moreover, if  $\lambda_0(P_+,M)> 0$, then the  assumptions that we need for the validity of Conjecture~\ref{conjMain} are weaker. We have
%%%%%%%%%%%%%%%%%%%%%%%%%%%
\begin{theorem}\label{thmlpluspos}
Let $P_0$ be critical operator in $M$, and let $P_+$ be a subcritical operator in $M$ satisfying $\lambda_+:=\lambda_0(P_+,M)> 0$. Suppose that either $P_0$ and $P_+$ are symmetric operators, or that
Davies' conjecture (Conjecture~\ref{conjD}) holds true for both $k_{P_0}^M$ and $k_{P_+}^M$.

 Assume further that for some fixed $y_1\in M$ there exists a positive constant $C$ satisfying the following condition: for each $x\in M$ there
exists $T(x)>0$ such that
\begin{equation}\label{Ass1m}
    k_{P_+}^M(x,y_1,t)\leq C k_{P_0}^M(x,y_1,t)\qquad \forall t>T(x).
\end{equation}
Then
\begin{equation}\label{eqconjMain1m}
\lim_{t\to\infty}\frac{k_{P_+}^M(x,y,t)}{k_{P_0}^M(x,y,t)}=0
\end{equation}
locally uniformly in $M\times M$.
 \end{theorem}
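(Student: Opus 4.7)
The plan is to first establish the vanishing at $y=y_1$ pointwise in $x$, then extend to arbitrary $y$, and finally upgrade to locally uniform convergence. The key observation driving the reduction to $y=y_1$ is that \eqref{Ass1m} furnishes a uniform \emph{upper} bound $C$ on the ratio there, while the strict positivity of $\lambda_+$ (together with Lemma~\ref{corskeleton}) means that any nontrivial limsup would be amplified unboundedly when one looks at earlier times. Concretely, fix $x\in M$ and set
$$L(x) := \limsup_{t\to\infty}\frac{k_{P_+}^M(x,y_1,t)}{k_{P_0}^M(x,y_1,t)} \le C.$$
Choose $t_n\to\infty$ realizing this limsup. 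For any $\tau<0$, Lemma~\ref{corskeleton} applied separately to $P_+$ (eigenvalue $\lambda_+$) and $P_0$ (eigenvalue $0$) gives
$$\lim_{n\to\infty}\frac{k_{P_+}^M(x,y_1,t_n+\tau)/k_{P_+}^M(x,y_1,t_n)}{k_{P_0}^M(x,y_1,t_n+\tau)/k_{P_0}^M(x,y_1,t_n)} = \mathrm{e}^{-\lambda_+\tau} = \mathrm{e}^{\lambda_+|\tau|}.$$
Multiplying by $k_{P_+}^M(x,y_1,t_n)/k_{P_0}^M(x,y_1,t_n)\to L(x)$ yields
$$\lim_{n\to\infty}\frac{k_{P_+}^M(x,y_1,t_n+\tau)}{k_{P_0}^M(x,y_1,t_n+\tau)} = L(x)\,\mathrm{e}^{\lambda_+|\tau|}.$$
Since $t_n+\tau\to\infty$, the bound \eqref{Ass1m} forces the LHS limit to be $\le C$, so $L(x)\mathrm{e}^{\lambda_+|\tau|}\le C$ for every $\tau<0$; letting $|\tau|\to\infty$ and using $\lambda_+>0$ gives $L(x)=0$.

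Next, I extend from $y=y_1$ to arbitrary $y\in M$. In the symmetric case $P=P^*$, the map $(y,t)\mapsto k_P^M(x,y,t)$ (for fixed $x$) is a positive solution of the parabolic equation for $P$, so the parabolic Harnack inequality produces $C_K,\delta>0$ with
$$k_{P_+}^M(x,y,t)\le C_K\, k_{P_+}^M(x,y_1,t+\delta),\qquad k_{P_0}^M(x,y,t)\ge C_K^{-1}\, k_{P_0}^M(x,y_1,t-\delta)$$
for $y$ in a compact neighborhood $K$ of $y_1$ and $t$ large. Dividing and inserting $k_{P_+}^M(x,y_1,t-\delta)$,
$$\frac{k_{P_+}^M(x,y,t)}{k_{P_0}^M(x,y,t)}\le C_K^2\,\frac{k_{P_+}^M(x,y_1,t+\delta)}{k_{P_+}^M(x,y_1,t-\delta)}\cdot\frac{k_{P_+}^M(x,y_1,t-\delta)}{k_{P_0}^M(x,y_1,t-\delta)},$$
where Lemma~\ref{corskeleton} gives the finite limit $\mathrm{e}^{-2\lambda_+\delta}$ for the first ratio while Step~1 produces $0$ for the second. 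Under the Davies hypothesis, I apply Conjecture~\ref{conjD} with reference point $(x,y_1)$ to see that $k_{P_+}^M(x,y,t)/k_{P_+}^M(x,y_1,t)$ and $k_{P_0}^M(x,y,t)/k_{P_0}^M(x,y_1,t)$ both admit finite positive limits, and then conclude from
$$\frac{k_{P_+}^M(x,y,t)}{k_{P_0}^M(x,y,t)} = \frac{k_{P_+}^M(x,y,t)/k_{P_+}^M(x,y_1,t)}{k_{P_0}^M(x,y,t)/k_{P_0}^M(x,y_1,t)}\cdot\frac{k_{P_+}^M(x,y_1,t)}{k_{P_0}^M(x,y_1,t)}$$
and Step~1 that the ratio tends to $0$.

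Finally, to obtain locally uniform convergence on $M\times M$, I run the analogous Harnack (symmetric case) or Davies (nonsymmetric case) argument in the $x$-variable, which replaces $x$ in a compact set by a fixed reference point modulo harmless time-shifts and a multiplicative constant; combined with the standard parabolic equicontinuity of normalized heat kernels this promotes the pointwise conclusion to uniform convergence on compacts. The main technical obstacle is the careful bookkeeping of the time shifts introduced by Harnack against the exponential factors produced by Lemma~\ref{corskeleton}. This is precisely where the hypothesis $\lambda_+>0$ is indispensable: a finite shift $\tau$ contributes only a bounded factor $\mathrm{e}^{\pm\lambda_+|\tau|}$ that is harmless, while the \emph{limit} $|\tau|\to\infty$ produces the unbounded amplification $\mathrm{e}^{\lambda_+|\tau|}$ that is incompatible with the uniform bound \eqref{Ass1m} unless the limsup is zero.
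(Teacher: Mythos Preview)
Your proof is correct and follows essentially the same approach as the paper: both arguments use Lemma~\ref{corskeleton} to exploit the gap $\lambda_+>0$ against the uniform bound~\eqref{Ass1m}, and then invoke the parabolic Harnack inequality to pass from $y_1$ to general $(x,y)$. Your limsup formulation is a minor reorganization of the paper's direct bound (the paper writes the ratio at time $t$ as the ratio at time $t+s$ times two correction factors and bounds it by $2C\mathrm{e}^{\lambda_+ s}$), and your extension step is simply more detailed than the paper's one-line appeal to Harnack and parabolic regularity.
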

%%%%%%%%%%%%%%%%%%%%%%%%%%
\begin{proof}  Fix $x\in M$, and $s\in \mathbb{R}_-$, and let $y_1\in M$ be the point satisfying  \eqref{Ass1m}. We have
\begin{equation}\label{eq:nm}
\frac{k_{P_+}^M(x,y_1,t)}{k_{P_0}^M(x,y_1,t)}=\frac{k_{P_+}^M(x,y_1,t+s)}{k_{P_0}^M(x,y_1,t+s)}\times \frac{k_{P_+}^M(x,y_1,t)}{k_{P_+}^M(x,y_1,t+s)}\times \frac{k_{P_0}^M(x,y_1,t+s)}{k_{P_0}^M(x,y_1,t)}.
\end{equation}
Recall that $\lambda_0(P_0,M)=0$, and by our assumption $\lambda_+ > 0$. By Lemma~\ref{corskeleton} we have  \begin{equation}\label{eq:n1m}
\lim_{t\to\infty } \frac{k_{P_+}^M(x,y_1,t)}{k_{P_+}^M(x,y_1,t+s)}= e^{\lambda_+ s},\qquad  \lim_{t\to\infty } \frac{k_{P_0}^M(x,y_1,t+s)}{k_{P_0}^M(x,y_1,t)}=1.
\end{equation}
Therefore, using \eqref{eq:n1m} and our assumption \eqref{Ass1m}, it follows from \eqref{eq:nm} that for $t$ sufficiently large we have
\begin{equation}\label{eq:n2m}
\frac{k_{P_+}^M(x,y_1,t)}{k_{P_0}^M(x,y_1,t)}\leq 2C e^{\lambda_+ s}.
\end{equation}
Since $s$ is an arbitrary negative number, \eqref{eq:n2m} implies that
\begin{equation}\label{eq:n3m}
\lim_{t\to\infty }\frac{k_{P_+}^M(x,y_1,t)}{k_{P_0}^M(x,y_1,t)}=0.
\end{equation}
The parabolic Harnack inequality and a standard parabolic regularity argument imply now that   \begin{equation*}
\lim_{t\to\infty}\frac{k_{P_+}^M(x,y,t)}{k_{P_0}^M(x,y,t)}=0
\end{equation*}
locally uniformly in $M\times M$.
\end{proof}
By the generalized maximum principle, assumption \eqref{Ass1m} is satisfied with $C=1$ if $P_+=P_0+V$ and $V$ is any {\em nonnegative} potential. In Section~\ref{secequiv}, we discuss some other conditions under which assumption \eqref{Ass1m} is satisfied.

We shall need also the following Liouville comparison theorem (see \cite{Pliouv}).
%%%%%%%%%%%%%%%%%%
\begin{theorem}\label{mainthmLt}
Let $P_0$ and $P_1$ be two symmetric operators defined on $M$ of the
form~\eqref{PS}. Assume that the following assumptions hold true.
\begin{itemize}
\item[(i)] The operator  $P_0$ is critical in $M$. Denote
by $\varphi\in \mathcal{C}_{P_0}(M)$ its ground state.

\item[(ii)]  $\lambda_0(P_1,M)\geq 0$, and there exists a
real function $\psi\in H^1_{\mathrm{loc}}(M)$ such that
$\psi_+\neq 0$, and $P_1\psi \leq 0$ in $M$, where
$u_+(x):=\max\{0, u(x)\}$.

\item[(iii)] Denote by $A_1, A_0$ the sections on $M$ of $\mathrm{End}(TM)$, and by
$m_1, m_0$ the weights corresponding to  $P_1, P_0$, respectively. The following matrix inequality holds
\begin{equation}\label{psialephia}
(\psi_+)^2(x) m_1(x)A_1(x)\leq C\varphi^2(x) m_0(x)A_0(x)\qquad  \mbox{for a.e. }
x\in  M,
\end{equation}
where $C>0$ is a positive constant.
\end{itemize}
Then the operator $P_1$ is critical in $M$, and $\psi$ is its
ground state. In particular, $\dim \mathcal{C}_{P_1}(M)=1$
and $\lambda_0(P_1,M)=0$.
\end{theorem}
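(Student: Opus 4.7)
The plan is to invoke the \emph{null-sequence characterization of criticality} for symmetric operators: if $\lambda_0(P_1,M)\geq 0$ and there exists a sequence $\{u_n\}$ of nonnegative, compactly supported functions in $H^1(M)$ converging locally in $M$ to some $u\not\equiv 0$ and satisfying $Q_1[u_n]\to 0$, then $P_1$ is critical in $M$ with $\lambda_0(P_1,M)=0$, and $u$ is, up to a positive multiplicative constant, the ground state of $P_1$. Granting this standard criterion, the task reduces to producing such a null sequence with limit $\psi_+$. The conclusions $\dim\mathcal{C}_{P_1}(M)=1$ and $\lambda_0(P_1,M)=0$ then follow automatically, and the identification of $\psi$ itself with the ground state follows from the strict positivity of the ground state: $\psi_+>0$ on $M$ forces $\psi>0$ a.e., so $\psi=\psi_+$ and $P_1\psi=0$.

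The null sequence for $P_1$ is built by a product ansatz from the null sequence for $P_0$. Criticality of $P_0$ supplies $\{v_n\}\subset C_0^\infty(M)$ with $0\leq v_n\leq 1$, $v_n\to 1$ locally uniformly on $M$, and
\[
\int_M \varphi^2\langle A_0\nabla v_n,\nabla v_n\rangle\,m_0\,dx\longrightarrow 0.
\]
Set $u_n:=\psi_+ v_n$, which is compactly supported and in $H^1(M)$. The analytic heart of the proof is to insert the nonnegative, compactly supported function $\psi_+ v_n^2$ into the weak subsolution inequality $P_1\psi\leq 0$. Using the Stampacchia identities $\nabla\psi=\nabla\psi_+$ a.e.\ on $\{\psi>0\}$ and $\psi_+\nabla\psi_+=0$ a.e.\ on $\{\psi\leq 0\}$, the weak inequality takes the form
\[
\int_M v_n^2\langle A_1\nabla\psi_+,\nabla\psi_+\rangle m_1\,dx+2\int_M v_n\psi_+\langle A_1\nabla\psi_+,\nabla v_n\rangle m_1\,dx+\int_M D_1\psi_+^2 v_n^2 m_1\,dx\leq 0.
\]
Expanding $Q_1[\psi_+ v_n]$ by the product rule, the three terms above are precisely those appearing in its expansion, so subtraction leaves the cross-term bound
\[
Q_1[\psi_+ v_n]\leq \int_M \psi_+^2\langle A_1\nabla v_n,\nabla v_n\rangle m_1\,dx.
\]

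The pointwise matrix inequality (iii) applied with the vector $\nabla v_n$ now gives
\[
Q_1[\psi_+ v_n]\leq C\int_M \varphi^2\langle A_0\nabla v_n,\nabla v_n\rangle m_0\,dx\longrightarrow 0,
\]
and since $u_n=\psi_+v_n\to\psi_+\not\equiv 0$ locally in $M$, the sequence $\{u_n\}$ is the desired null sequence for $P_1$. Combined with hypothesis (ii), the null-sequence criterion completes the proof.

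The main obstacle I expect is the second step: the rigorous manipulation of $\psi_+$ as a test function. This requires Stampacchia's chain rule for $H^1_{\mathrm{loc}}$ functions to justify the cancellations on $\{\psi\leq 0\}$, together with verification that $\psi_+ v_n^2$ is an admissible test function in the weak formulation of $P_1\psi\leq 0$ (guaranteed by the $L^p_{\mathrm{loc}}$ hypotheses on the coefficients of $P_1$ and the compact support of $v_n$). Once these technicalities are in place, the interaction of the null sequence $\{v_n\}$ with the matrix hypothesis~(iii) delivers the result essentially by the one-line estimate above.
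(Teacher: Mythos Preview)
The paper does not actually prove Theorem~\ref{mainthmLt}; it is quoted from \cite{Pliouv} as a known result and used as a black box in the proof of Theorem~\ref{mainthmFKP}. Your proposal correctly reconstructs the argument of \cite{Pliouv}: the null-sequence (Agmon) characterization of criticality, the ground-state transform that converts the $P_0$ null sequence into the energy $\int_M \varphi^2\langle A_0\nabla v_n,\nabla v_n\rangle m_0\,dx\to 0$, the test-function trick $\psi_+v_n^2$ in the weak inequality $P_1\psi\le 0$ yielding $Q_1[\psi_+v_n]\le\int_M\psi_+^2\langle A_1\nabla v_n,\nabla v_n\rangle m_1\,dx$, and finally the matrix bound (iii) closing the loop. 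The Stampacchia issues you flag are exactly the ones that need care, and your treatment of them is adequate at this level of detail. So your argument is correct and coincides with the approach of the source the paper cites.
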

%

%
%\begin{comment}
%\begin{rem}\label{reSPHI}{\em
%Heat kernels of symmetric operators satisfy the following strong parabolic Harnack inequality \cite{D}.
%For any compact set $K\Subset M$ and $T>0$ there exists a positive constant $C=C(K,T)$ such that
%\begin{equation}\label{SPHI}
%k_P^M(x_1,x_2,t)\leq C k_P^M(x_3,x_4,t)\qquad \forall x_1,x_2,x_3,x_4\in K, t\geq T.
%\end{equation}
% }
%\end{rem}
%\end{comment}
%
Let $f,g \in C(\Omega)$ be nonnegative functions, we use the notation $f\asymp g$ on
$\Omega$ if there exists a positive constant $C$ such that
$$C^{-1}g(x)\leq f(x) \leq Cg(x) \qquad \mbox{ for all } x\in \Omega.$$
In the sequel we shall need also to use results concerning small
and semismall perturbations.
%%%%%%%%%%%%%%%%%%%%%
These notions were introduced in \cite{P89} and \cite{Msemismall}
respectively, and are closely related to the stability of
$\mathcal{C}_P(\Omega)$ under perturbation by a potential $V$.
\begin{definition} \label{spertdef}
Let $P$ be a subcritical operator in $M$, and let $V$ be a
potential defined on $M$.

{\em (i)} We say that $V$ is a {\em small perturbation}
 of $P$ in $M$ if
\be \label{sperteq} \lim_{j\rightarrow \infty}\left\{\sup_{x,y\in
M_{j}^*} \int_{M_{j}^*}\frac{\Green{M}{P}{x}{z}|V(z)|
\Green{M}{P}{z}{y}}{\Green{M}{P}{x}{y}}\,\mathrm{d}\mu(z)\right\}=0.
\end{equation}

{\em (ii)} $V$ is a {\em  semismall perturbation} of
$P$ in $M$ if for some $x_0\in M$ we have
 \be \label{semisperteq} \lim_{j\rightarrow
\infty}\left\{\sup_{y\in M_{j}^*} \int_{M_{j}^*}
\frac{\Green{M}{P}{x_0}{z}|V(z)|\Green{M}{P}{z}{y}}
{\Green{M}{P}{x_0}{y}}\,\mathrm{d}\mu(z)\right\}=0. \end{equation}
 \end{definition}
 Recall that small perturbations are semismall \cite{Msemismall}. For semismall perturbations we have
\begin{theorem}[\cite{Msemismall,P89,P90}]\label{thmssp}
Let~$P$ be a subcritical operator in~$M$.
Assume that $V=V_+-V_-$
is a semismall perturbation of~$P^*$ in~$M$
satisfying~$V_- \not=0$, where  $V_\pm(x)=\max\{0, \pm V(x)\}$.

 Then there exists $\alpha_0>0$ such that $P_\alpha:= P+\alpha V$
 is subcritical  in $M$ for all $0\leq \alpha< \alpha_0$
 and critical for $\alpha=\alpha_0$.

 Moreover, let $\varphi$ be the
ground state of $P+\alpha_0 V$ and let $y_0$ be a fixed reference point in $M_1$.
Then  for any $0\leq \alpha< \alpha_0$
$$   \varphi   \asymp   \Green{M}{P_\alpha}{\cdot}{y_0} \qquad \mbox {in } M_1^*.
$$
\end{theorem}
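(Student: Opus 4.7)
The plan is to divide the argument into three parts: (i) locate $\alpha_0$ as the right endpoint of the subcriticality interval, (ii) verify that $P_{\alpha_0}$ is critical rather than merely nonnegative, and (iii) establish the two-sided bound $\varphi \asymp \Green{M}{P_\alpha}{\cdot}{y_0}$ on $M_1^*$.

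For (i), I would study the set
$$
S := \{\alpha \geq 0 : P + \alpha V \text{ is subcritical in } M\}.
$$
By hypothesis $0 \in S$. To see that $S$ is relatively open, I would use the semismall perturbation machinery: if $P + \alpha_1 V$ is subcritical, then $V$ is a semismall perturbation of $(P + \alpha_1 V)^*$ as well, and a Neumann-type expansion of the perturbed Green's function (whose absolute convergence is guaranteed by~\eqref{semisperteq}) shows that $P + \alpha V$ remains subcritical with comparable Green's function for $\alpha$ close to $\alpha_1$. In particular, $S$ contains a neighborhood of $0$, so $\alpha_0 := \sup S > 0$. To see that $\alpha_0 < \infty$, use $V_- \neq 0$: after the ground state transformation $u = \psi v$ with $\psi$ a positive solution of $Pu=0$, one passes to a symmetric form where a standard Rayleigh quotient/test-function argument centered on a compact set where $V_- > 0$ pushes $\lambda_0(P + \alpha V, M)$ strictly below zero for $\alpha$ sufficiently large.

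For (ii), criticality of $P_{\alpha_0}$ is a consequence of the openness proved in (i) together with the maximality of $\alpha_0$. Indeed, if $P_{\alpha_0}$ were subcritical, the same stability result would produce a $\delta > 0$ with $[0, \alpha_0 + \delta) \subset S$, contradicting $\alpha_0 = \sup S$. Hence $\lambda_0(P_{\alpha_0}, M) = 0$ and the equation $P_{\alpha_0} u = 0$ admits (up to positive scalar) a unique positive solution, the ground state $\varphi$.

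For (iii), I would combine two comparabilities. First, for each fixed $\alpha \in [0, \alpha_0)$, the semismall perturbation hypothesis together with the convergent Neumann expansion for $\Green{M}{P_\alpha}{\cdot}{y_0}$ around $\Green{M}{P}{\cdot}{y_0}$ yields
$$
\Green{M}{P_\alpha}{\cdot}{y_0} \asymp \Green{M}{P}{\cdot}{y_0} \qquad \text{on } M_1^*.
$$
Second, the ground state $\varphi$ of $P_{\alpha_0}$ can be identified, up to a positive scalar, with the limit
$$
\lim_{\beta \nearrow \alpha_0} \frac{\Green{M}{P_\beta}{\cdot}{y_0}}{\Green{M}{P_\beta}{x_0}{y_0}},
$$
in view of~\eqref{eqgreen} and the characterization of $\varphi$ as a positive solution of minimal growth at infinity. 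Since the integrals in~\eqref{semisperteq} remain uniformly small as $\beta \nearrow \alpha_0$, the constants in the Green's function comparability do not degenerate in the limit, giving $\varphi \asymp \Green{M}{P}{\cdot}{y_0}$ on $M_1^*$. Composed with the first comparability, this delivers $\varphi \asymp \Green{M}{P_\alpha}{\cdot}{y_0}$ on $M_1^*$.

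The hardest step is the \emph{uniform} control needed to pass to the limit $\beta \nearrow \alpha_0$ in the second comparability: one must verify that the implicit constants in the Green's function iteration stay bounded as $\beta$ approaches the critical coupling, so that the ratio does not collapse or explode in the limit. This is precisely the point where the semismall — rather than merely small — nature of the perturbation is used in an essential way, through the uniformity in $y$ of the supremum in~\eqref{semisperteq}. The remaining ingredients (continuity of $\lambda_0$ in $\alpha$, the generalized maximum principle, and the minimal-growth characterization of ground states and minimal positive Green's functions) are standard.
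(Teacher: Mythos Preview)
The paper does not contain a proof of this theorem; it is stated with a citation to \cite{Msemismall,P89,P90} and invoked as a black box in the proof of Theorem~\ref{thm_ssp}. There is therefore no in-paper argument against which to compare your proposal.

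That said, your outline is a faithful reconstruction of the approach in those references, with two points worth sharpening. In part~(i), the ground state transform $u = \psi v$ does not symmetrize $P$ in the non-symmetric setting, so the Rayleigh-quotient argument as stated does not apply directly; the standard route to $\alpha_0 < \infty$ is instead to fix a smooth bounded $\Omega \Subset M$ on which $V_-$ is nontrivial, note that the principal Dirichlet eigenvalue $\lambda_0(P_\alpha,\Omega)$ tends to $-\infty$ as $\alpha \to \infty$, and use the domain monotonicity $\lambda_0(P_\alpha,M) \leq \lambda_0(P_\alpha,\Omega)$. In part~(ii), your openness argument rules out subcriticality of $P_{\alpha_0}$, but you should also confirm $\lambda_0(P_{\alpha_0},M) \geq 0$ (otherwise the operator is supercritical, not critical); this follows from the concavity of $\alpha \mapsto \lambda_0(P_\alpha,M)$ implicit in Lemma~\ref{lem_convex}. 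Finally, your parenthetical ``semismall --- rather than merely small'' has the inclusion backwards: small perturbations are semismall, not conversely. The substantive point you are making is correct, namely that the uniformity in $y$ built into~\eqref{semisperteq} is exactly what keeps the comparison constants for $\Green{M}{P_\beta}{\cdot}{y_0}/\Green{M}{P}{\cdot}{y_0}$ bounded as $\beta \nearrow \alpha_0$.
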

%
%%%%%%%%%%%%%%%%%%%%%%%%%%%%%%%%%%%%%%%%%%%%%%
\section{The symmetric case}\label{secmain}
%%%%%%%%%%%%%%%%%%%%%%%%%%%%%%%%%%%%%%%%%%%%%
In this section we prove the following theorem.
\begin{theorem}\label{mainthmFKP}
Let the subcritical operator~$P_+$
and the critical operator~$P_0$ be symmetric in $M$.
Assume that $A_+$ and $A_0$, the sections on $M$ of $\mathrm{End}(TM)$, and the weights $m_+$ and $m_0$, corresponding to  $P_+$ and $P_0$, respectively, satisfy the following matrix inequality
\begin{equation}\label{A+leqA0}
m_+(x)A_+(x)\leq C m_0(x)A_0(x) \qquad  \mbox{for a.e. } x\in M,
\end{equation}
where $C$ is a positive constant. Assume further that condition \eqref{Ass1m} holds true.
Then
\begin{equation}\label{eqconjMain1}
\lim_{t\to\infty}\frac{k_{P_+}^M(x,y,t)}{k_{P_0}^M(x,y,t)}=0
\end{equation}
locally uniformly in $M\times M$.
 \end{theorem}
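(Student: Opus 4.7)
The plan is to split on the sign of $\lambda_+:=\lambda_0(P_+,M)$ and, in the harder case $\lambda_+=0$, to argue by contradiction: from a positive limsup of the on-diagonal ratio at $(y_1,y_1)$ I would produce, by a rescaling/compactness argument, a positive solution $\psi\in\mathcal{C}_{P_+}(M)$ dominated by a multiple of the ground state $\varphi_0$ of $P_0$. The matrix hypothesis~\eqref{A+leqA0} would then let me invoke the Liouville comparison theorem (Theorem~\ref{mainthmLt}) with $P_1=P_+$, forcing $P_+$ to be critical and contradicting its subcriticality.

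If $\lambda_+>0$ the conclusion follows from Theorem~\ref{thmlpluspos}, whose hypotheses are supplied by the symmetry of $P_\pm$ (via Lemma~\ref{corskeleton}). Assume therefore $\lambda_+=0$ and, aiming at a contradiction, that
\[
\ell:=\limsup_{t\to\infty}\frac{k_{P_+}^M(y_1,y_1,t)}{k_{P_0}^M(y_1,y_1,t)}>0.
\]
Pick $t_n\to\infty$ along which this limsup is attained and set $u_n(x,s):=k_{P_+}^M(x,y_1,t_n+s)/k_{P_0}^M(y_1,y_1,t_n)$. Because $P_0$ is symmetric and critical, $\dim\mathcal{C}_{P_0}(M)=1$ and Davies' conjecture holds for it (Remark~\ref{dim1}); normalising the ground state by $\varphi_0(y_1)=1$, one has $k_{P_0}^M(x,y_1,t_n)/k_{P_0}^M(y_1,y_1,t_n)\to\varphi_0(x)$. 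Combined with assumption~\eqref{Ass1m} this gives $u_n(\cdot,0)\le C\varphi_0$ locally. Parabolic Harnack together with Remark~\ref{6Rem21} then allows extraction of a subsequence along which $u_n\to u$ locally uniformly on $M\times\mathbb{R}$, with $\partial_s u+P_+u=0$.

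To identify $u$, I decompose
\[
u_n(x,s)=\frac{k_{P_+}^M(x,y_1,t_n+s)}{k_{P_+}^M(y_1,y_1,t_n+s)}\cdot\frac{k_{P_+}^M(y_1,y_1,t_n+s)}{k_{P_+}^M(y_1,y_1,t_n)}\cdot\frac{k_{P_+}^M(y_1,y_1,t_n)}{k_{P_0}^M(y_1,y_1,t_n)}.
\]
The third factor tends to $\ell$. By Lemma~\ref{corskeleton}, extended to all $s\in\mathbb{R}$ by log-convexity in the symmetric case, the second factor tends to $e^{-\lambda_+ s}=1$. Rewriting
\[
\frac{k_{P_+}^M(x,y_1,t_n+s)}{k_{P_+}^M(y_1,y_1,t_n+s)}=\frac{k_{P_+}^M(x,y_1,t_n+s)/k_{P_+}^M(x,y_1,t_n)}{k_{P_+}^M(y_1,y_1,t_n+s)/k_{P_+}^M(y_1,y_1,t_n)}\cdot\frac{k_{P_+}^M(x,y_1,t_n)}{k_{P_+}^M(y_1,y_1,t_n)},
\]
Lemma~\ref{corskeleton} again kills the first quotient, while the second converges, along a further diagonal extraction, to a function $\varphi_+(x)$ independent of $s$. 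Hence $u(x,s)=\ell\,\varphi_+(x)$; since $\partial_s u\equiv 0$, the elliptic equation $P_+\varphi_+=0$ holds and $\psi:=\ell\varphi_+\in\mathcal{C}_{P_+}(M)$. Passing to the limit in~\eqref{Ass1m} yields $\psi\le C\,\varphi_0$, and combined with~\eqref{A+leqA0} this gives
\[
\psi^2\,m_+A_+\le C^2\,\varphi_0^2\,m_0 A_0.
\]
All hypotheses of Theorem~\ref{mainthmLt} being met, $P_+$ is critical---the desired contradiction. Hence $k_{P_+}^M(y_1,y_1,t)/k_{P_0}^M(y_1,y_1,t)\to 0$, and the parabolic Harnack inequality applied successively in each variable (using symmetry of the kernels) combined with Lemma~\ref{corskeleton} (which implies $k_{P_\pm}^M(y_1,y_1,t\pm\delta)\sim k_{P_\pm}^M(y_1,y_1,t)$ for fixed $\delta$) bootstraps this pointwise statement to locally uniform convergence on $M\times M$.

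The main obstacle is the identification of the rescaled parabolic limit $u$ as $s$-independent without assuming Davies' conjecture for $P_+$ itself; this is precisely what the three-factor decomposition achieves, and it is at this point that the symmetry of $P_+$ enters essentially, through Lemma~\ref{corskeleton} and the log-convexity of $t\mapsto k_{P_+}^M(y_1,y_1,t)$, which supply the exact asymptotic rate of the ratio of heat kernels at shifted times.
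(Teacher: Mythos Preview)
Your proof is correct and follows essentially the same strategy as the paper: reduce to $\lambda_+=0$ via Theorem~\ref{thmlpluspos}, assume a positive subsequential limit of the ratio, produce from it a positive elliptic solution $u_+\in\mathcal{C}_{P_+}(M)$ dominated by the ground state $\varphi_0$ of $P_0$, and invoke the Liouville comparison theorem to force $P_+$ to be critical. The only notable differences are cosmetic. First, you anchor the contradiction at the distinguished point $(y_1,y_1)$ and bootstrap to all $(x,y)$ via Harnack at the end, whereas the paper starts the contradiction at an arbitrary $(x_0,y_0)$ and instead uses Harnack (together with Lemma~\ref{corskeleton}) to transfer~\eqref{Ass1m} from $y_1$ to $y_0$. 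Second, your three-factor decomposition for showing $s$-independence of the limit $u$ is slightly more elaborate than necessary: the paper observes directly that
\[
u_n(x,0)=u_n(x,s)\cdot\frac{k_{P_+}^M(x,y_0,t_n)}{k_{P_+}^M(x,y_0,t_n+s)},
\]
and since Lemma~\ref{corskeleton} (applied at $(x,y_0)$, not at the diagonal) makes the second factor tend to~$1$, one gets $u(x,0)=u(x,s)$ without any further extraction. Your route through an anchor at $(y_1,y_1)$ and a diagonal argument is also valid, just a bit longer.
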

\begin{proof}
 By Theorem~\ref{thmlpluspos}, we may assume that $\lambda_0(P_+,M)= 0$.

Assume to the contrary that for some $x_0,y_0\in M$ there exists a sequence $\{t_n\}$ such that $t_n \to \infty$ and
    \begin{equation}\label{eqconjMain2}
\lim_{n\to\infty}\frac{k_{P_+}^M(x_0,y_0,t_n)}{k_{P_0}^M(x_0,y_0,t_n)}=K>0.
 \end{equation}
Consider the sequence of functions $\{u_n\}_{n=1}^\infty$ defined by
$$ u_n(x,s):=\frac{k_{P_+}^M(x,y_0,t_n+s)}{k_{P_0}^M(x_0,y_0,t_n)}\qquad x\in M,\; s\in \mathbb{R}.$$
We note that
$$ u_n(x,s)=\frac{k_{P_+}^M(x,y_0,t_n+s)}{k_{P_+}^M(x_0,y_0,t_n)}\times \frac{k_{P_+}^M(x_0,y_0,t_n)}{k_{P_0}^M(x_0,y_0,t_n)}\;.$$
Therefore, by assumption \eqref{eqconjMain2} and Remark~\ref{6Rem21} it follows that we may subtract a subsequence which we rename by $\{u_n\}$ such that
$$\lim_{n\to\infty} u_n(x,s)=u_+(x,s),$$
where $u_+\in \mathcal{H}_{P_+}(M\times \mathbb{R})$ and $u_+\gneqq 0$.

On the other hand,
\begin{align*}
v_n(x):=\frac{k_{P_+}^M(x,y_0,t_n)}{k_{P_0}^M(x_0,y_0,t_n)}
=u_n(x,s)\frac{k_{P_+}^M(x,y_0,t_n)}{k_{P_+}^M(x,y_0,t_n+s)}\;.
\end{align*}
By our assumption, $\lambda_0(P_+,M)= 0$, therefore Lemma~\ref{corskeleton} implies that
$$\lim_{n\to\infty}\frac{k_{P_+}^M(x,y_0,t_n)}{k_{P_+}^M(x,y_0,t_n+s)}=1.$$
Therefore,
$$\lim_{n\to\infty}v_n(x)=\lim_{n\to\infty}u_n(x,s)=u_+(x,s),$$
and $u_+$ does not depend on $s$, and hence $u_+$ is a positive solution of the elliptic
equation $P_+u=0$ in $M$ and we have
\begin{equation}\label{eq0}
    \lim_{n\to\infty}\frac{k_{P_+}^M(x,y_0,t_n)}{k_{P_0}^M(x_0,y_0,t_n)}=u_+(x).
\end{equation}
On the other hand, by Remark~\ref{dim1} we have
\begin{equation}\label{eq1}
\lim_{n\to\infty}\frac{k_{P_0}^M(x,y_0,t_n)}{k_{P_0}^M(x_0,y_0,t_n)}
= \frac{\varphi(x)}{\varphi(x_0)} =: u_0(x),
\end{equation}
where $\varphi$ is the ground state of $P_0$.

Combining \eqref{eq0} and \eqref{eq1}, we obtain
 \begin{align}\label{eq2}
\lim_{n\to\infty}  \frac{k_{P_+}^M(x,y_0,t_n)}{k_{P_0}^M(x,y_0,t_n)}= \lim_{n\to\infty} \left\{
 \dfrac{\frac{k_{P_+}^M(x,y_0,t_n)}{k_{P_0}^M(x_0,y_0,t_n)}}
 {\frac{k_{P_0}^M(x,y_0,t_n)}{k_{P_0}^M(x_0,y_0,t_n)}}\right\}=\frac{u_+(x)}{u_0(x)}.
 \end{align}
 On the other hand, by assumption \eqref{Ass1m} and the parabolic Harnack inequality
there exists a positive constant~$C_1$
which depends on $P_+, P_0, y_0, y_1$
such that
\begin{multline}\label{Ass2}
    C_1^{-1}k_{P_+}^M(x,y_0,t-1)\leq k_{P_+}^M(x,y_1,t)\\\leq C k_{P_0}^M(x,y_1,t) \leq
CC_1 k_{P_0}^M(x,y_0,t+1)    \quad \forall x\in M, t>T(x).
\end{multline}
Moreover, by Lemma~\ref {corskeleton} we have
\begin{equation}\label{eqconjDw91}
\lim_{t\to\infty}\frac{k_{P_+}^M(x,y_0,t-1)}{k_{P_+}^M(x,y_0,t)}=1, \quad \mbox{and}\quad \lim_{t\to\infty}\frac{k_{P_0}^M(x,y_0,t+1)}{k_{P_0}^M(x,y_0,t)}=1
\qquad \forall  x\in M.
\end{equation}
Therefore, \eqref{Ass2} and \eqref{eqconjDw91} imply that there exists $C_0>0$ such that
\begin{equation}\label{Ass3}
    k_{P_+}^M(x,y_0,t)\leq C_0 k_{P_0}^M(x,y_0,t)    \qquad \forall x\in M, t>T(x).
\end{equation}
 Consequently, \eqref{eq2} and \eqref{Ass3} imply that
 $$u_+(x)\leq C_0 u_0(x) = \tilde{C}_0 \varphi(x)\qquad \forall x\in M. $$
 Therefore, using \eqref{A+leqA0} we obtain
 \begin{equation}\label{u+leu_0}
(u_+)^2(x) m_+(x)A_+(x)\leq C_2\varphi^2(x) m_0(x)A_0(x)\qquad  \mbox{for a.e. } x\in M,
\end{equation}
where $C_2>0$ is a positive constant. Thus, Theorem~\ref{mainthmLt} implies that $P_+$ is critical in $M$ which is a contradiction.
The last statement of the theorem follows from the parabolic
Harnack inequality and parabolic regularity.
  \end{proof}
By the generalized maximum principle, assumption \eqref{Ass1m} in Theorem~\ref{mainthmFKP} is satisfied with $C=1$ if $P_+=P_0+V$, where  $P_0$ is a critical operator on $M$ and $V$ is any {\em nonnegative} potential. Note that if the potential is in addition nontrivial,
then $P_+$ is indeed subcritical in $M$. Therefore, we have
\begin{corollary}\label{nonnegpert}
Let $P_0$ be a symmetric operator which is critical in $M$,
and let $P_+:=P_0+V$, where  $V$ is a nonzero nonnegative potential.  Then
\begin{equation}\label{eqconjMain5}
\lim_{t\to\infty}\frac{k_{P_+}^M(x,y,t)}{k_{P_0}^M(x,y,t)}=0
\end{equation}
locally uniformly in $M\times M$.
\end{corollary}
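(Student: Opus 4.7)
The plan is to simply verify that the four hypotheses of Theorem~\ref{mainthmFKP} are met by the pair $(P_0, P_+ = P_0 + V)$, and then invoke that theorem directly. The corollary is essentially a transparent specialization of the main result.

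First, I would check the structural assumptions. Since $V$ acts as a multiplication operator, $P_+ = P_0 + V$ has the same principal part as $P_0$: the sections $A_+ = A_0$ and weights $m_+ = m_0$ coincide. Hence the matrix inequality \eqref{A+leqA0} holds trivially with $C = 1$. Symmetry of $P_+$ follows from symmetry of $P_0$ together with the fact that adding a real potential preserves the form~\eqref{PS}.

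Next I would verify subcriticality of $P_+$. Since $P_0$ is critical, the ground state $\varphi$ is the unique (up to constant) positive supersolution. Because $V \geq 0$ and $V \not\equiv 0$, we have $P_+\varphi = P_0\varphi + V\varphi = V\varphi \gneqq 0$, so $\varphi$ is a positive supersolution but not a solution for $P_+$. By the criticality/subcriticality dichotomy recalled in the introduction (uniqueness of positive supersolutions characterizes criticality), this forces $P_+$ to be subcritical in $M$.

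Third, I would verify the key comparison \eqref{Ass1m}. By the Feynman--Kac formula (or equivalently the generalized maximum principle applied to the parabolic problem with nonnegative potential), for any $x, y \in M$ and all $t > 0$,
\begin{equation*}
k_{P_+}^M(x,y,t) \leq k_{P_0}^M(x,y,t),
\end{equation*}
so \eqref{Ass1m} holds with $C = 1$ and $T(x) = 0$ for the given reference point $y_1$. (This also ensures $\lambda_0(P_+, M) \geq \lambda_0(P_0, M) = 0$, so the standing assumption on $\lambda_0$ is preserved.)

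With all four hypotheses of Theorem~\ref{mainthmFKP} in place, the conclusion \eqref{eqconjMain5} follows at once locally uniformly in $M \times M$. There is no real obstacle here; the only point worth flagging is that the corollary implicitly uses the fact that a nonzero nonnegative perturbation of a critical operator genuinely produces a subcritical one, which is a standard consequence of the uniqueness of the ground state as a positive supersolution.
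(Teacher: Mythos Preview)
Your proposal is correct and follows essentially the same approach as the paper: verify that $A_+=A_0$, $m_+=m_0$ (so \eqref{A+leqA0} is trivial), that $P_+$ is subcritical because a nonzero nonnegative perturbation of a critical operator yields a subcritical one, and that \eqref{Ass1m} holds with $C=1$ by the generalized maximum principle, then invoke Theorem~\ref{mainthmFKP}. The paper states the corollary as an immediate consequence with exactly these observations in the paragraph preceding it.
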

\begin{remark}
The pointwise limit~\eqref{eqconjMain1} of Theorem~\ref{mainthmFKP}
leads to a normwise limit of the type~\eqref{KZqconj}
in suitably chosen functional spaces.
Let us assume that the initial data~$u_0$ of~\eqref{heat}
lie in the space $L_0^1(M)$ of compactly supported
integrable functions on~$M$ equipped with the usual $L^1$-norm.
Since $\mathrm{e}^{-P_+ t}$ and $\mathrm{e}^{-P_0 t}$
are positivity-preserving
under the hypotheses of Theorem~\ref{mainthmFKP},
we can restrict ourselves to $u_0 \geq 0$.
For any $x \in M$, we have
$$
  \mathrm{e}^{-P_+t} u_0(x) =\!\!
  \int_M \!\!k_+(x,y,t) \, u_0(y) \, \mathrm{d}\mu(y)
  \leq \!
  \left\{
  \sup_{y\in\mathrm{supp}(u_0)} \frac{k_+(x,y,t)}{k_0(x,y,t)}
  \right\}
  \mathrm{e}^{-P_0 t} u_0(x)
  \,.
$$
Consequently, for any compact set $K \Subset M$, we arrive at
$$
  \frac{\|\mathrm{e}^{-P_+t}\|_{L_0^1(M) \to L^\infty(K)}}
  {\|\mathrm{e}^{-P_0 t}\|_{L_0^1(M) \to L^\infty(K)}}
  \leq
  \sup_{x \in K, \ y\in\mathrm{supp}(u_0)} \frac{k_+(x,y,t)}{k_0(x,y,t)}
  \xrightarrow[t \to 0]{} 0
$$
by Theorem~\ref{mainthmFKP}.
\end{remark}
%

%%%%%%%%%%%%%%
\section{Davies' conjecture and Conjecture~\ref{conjMain}}\label{secDavies}
%%%%%%%%%%%
In the present section we discuss the nonsymmetric case. We study two cases where Davies' conjecture imply Conjecture~\ref{conjMain}. First, we show that in the nonsymmetric case, the result of Corollary~\ref{nonnegpert} for positive perturbations of a critical operator~$P_0$ still holds provided that the validity of Davies' conjecture (Conjecture~\ref{conjD})
is assumed instead of the symmetry hypothesis.
More precisely, we have
\begin{theorem}\label{thm_nonselfadj}
Let $P_0$ be a critical operator in $M$, and let $P_+=P_0+V$, where~$V$ is any nonzero nonnegative potential on $M$. Assume that
Davies' conjecture (Conjecture~\ref{conjD}) holds true for both $k_{P_0}^M$ and $k_{P_+}^M$. Then
\begin{equation}\label{eqconjMain6}
\lim_{t\to\infty}\frac{k_{P_+}^M(x,y,t)}{k_{P_0}^M(x,y,t)}=0
\end{equation}
locally uniformly in $M\times M$.
 \end{theorem}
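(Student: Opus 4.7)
The plan is to mimic the contradiction argument in the proof of Theorem~\ref{mainthmFKP}, replacing the concluding Liouville-comparison step (Theorem~\ref{mainthmLt}, which required symmetry and the matrix hypothesis~\eqref{A+leqA0}) with the supersolution characterization of criticality recalled in Section~\ref{Introduction}. I would first split into two cases according to the value of $\lambda_+:=\lambda_0(P_+,M)$. Because $V\geq 0$, the generalized maximum principle gives $k_{P_+}^M(x,y,t)\leq k_{P_0}^M(x,y,t)$ on $M\times M\times(0,\infty)$, so hypothesis~\eqref{Ass1m} is satisfied with $C=1$ and $T(x)=0$; together with the assumed Davies' conjecture for both operators, Theorem~\ref{thmlpluspos} immediately yields the conclusion when $\lambda_+>0$.

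It then remains to handle $\lambda_+=0$. Following the scheme of Theorem~\ref{mainthmFKP}, I would suppose for contradiction that there exist $x_0,y_0\in M$ and $t_n\to\infty$ with
\[
\lim_{n\to\infty}\frac{k_{P_+}^M(x_0,y_0,t_n)}{k_{P_0}^M(x_0,y_0,t_n)}=K>0,
\]
and study the functions $u_n(x,s):=k_{P_+}^M(x,y_0,t_n+s)/k_{P_0}^M(x_0,y_0,t_n)$ for $x\in M$ and $s\in\mathbb{R}_-$. Writing
\[
u_n(x,s)=\frac{k_{P_+}^M(x,y_0,t_n+s)}{k_{P_+}^M(x_0,y_0,t_n+s)}\cdot\frac{k_{P_+}^M(x_0,y_0,t_n+s)}{k_{P_+}^M(x_0,y_0,t_n)}\cdot\frac{k_{P_+}^M(x_0,y_0,t_n)}{k_{P_0}^M(x_0,y_0,t_n)},
\]
Davies' conjecture for $P_+$ sends the first factor to $a_+(x,y_0)/a_+(x_0,y_0)$, Lemma~\ref{corskeleton} combined with $\lambda_+=0$ sends the second factor to $1$, and the third factor tends to $K$ by assumption. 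Hence $u_+(x):=\lim_n u_n(x,s)$ exists, is independent of $s$, and defines a positive element of $\mathcal{C}_{P_+}(M)$ with $u_+(x_0)=K$. An entirely parallel passage to the limit applied to the upper bound $u_n(x,s)\leq k_{P_0}^M(x,y_0,t_n+s)/k_{P_0}^M(x_0,y_0,t_n)$, now using Davies' conjecture for $P_0$ and the one-dimensionality of $\mathcal{C}_{P_0}(M)$ (which forces $a_0(\cdot,y_0)$ to be a constant multiple of the $P_0$-ground state $\varphi$), gives $u_+(x)\leq\varphi(x)/\varphi(x_0)$ on $M$.

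The contradiction is then immediate: since $P_+u_+=0$ and $V\geq 0$, one has $P_0u_+=-Vu_+\leq 0$, so $u_+$ is a positive supersolution of $P_0u=0$; by the supersolution characterization of criticality recalled in Section~\ref{Introduction}, $u_+$ must be a positive multiple $c\varphi$ of the ground state, and the identity $0=P_+u_+=cP_0\varphi+cV\varphi=cV\varphi$ forces $V\equiv 0$, against the hypothesis. Local uniform convergence follows from the parabolic Harnack inequality and parabolic regularity exactly as in Theorem~\ref{thmlpluspos}. The only genuine obstacle in the nonsymmetric setting is to find a replacement for Theorem~\ref{mainthmLt}; once one notices that the uniqueness-of-positive-supersolution characterization of criticality is available without any symmetry assumption, it slots in cleanly, and in fact makes the argument slightly shorter than its symmetric counterpart since no matrix hypothesis of the form~\eqref{A+leqA0} is needed for this class of perturbations.
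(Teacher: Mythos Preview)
Your proof has a sign error at the decisive moment. From $P_+u_+=0$ you correctly compute $P_0u_+=-Vu_+\leq 0$, but in the paper's convention (and the standard one for operators whose principal part carries a minus sign, as in~\eqref{P}) a \emph{supersolution} of $P_0u=0$ satisfies $P_0u\geq 0$; what you have shown is that $u_+$ is a positive \emph{subsolution} of $P_0$. The criticality characterization you invoke---uniqueness of the positive supersolution---says nothing about subsolutions, and indeed a critical operator can have many positive subsolutions (for $-\Delta$ on $\mathbb{R}^2$, every positive subharmonic function, e.g.\ $1+|x|^2$, is one). So the contradiction you draw does not follow, and the uniqueness step ``$u_+=c\varphi$'' is unjustified.

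The repair is immediate and uses precisely the inequality you already derived. Set $w:=u_0-u_+$ (equivalently $\varphi/\varphi(x_0)-u_+$); then $w\geq 0$ by your bound $u_+\leq u_0$, and
\[
P_0w \;=\; P_0u_0-P_0u_+ \;=\; 0-(-Vu_+)\;=\;Vu_+\;\geq\;0,
\]
so $w$ is a nonnegative \emph{supersolution} of $P_0u=0$ which is not a solution (since $V\not\equiv 0$ and $u_+>0$). By the strong maximum principle $w>0$, giving a positive supersolution that is not a multiple of $\varphi$, which contradicts the criticality of $P_0$. This is exactly the contradiction the paper uses (its $v=u_0-Ku_+$). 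Everything else in your outline---the reduction via Theorem~\ref{thmlpluspos}, the use of Davies' conjecture and Lemma~\ref{corskeleton} to produce $u_+\in\mathcal{C}_{P_+}(M)$, and the pointwise bound $u_+\leq u_0$---is correct and matches the paper's argument.
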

\begin{proof}
 By Theorem~\ref{thmlpluspos}, we may assume that $\lambda_0(P_+,M)= 0$.

Assume to the contrary that for some $x_0,y_0\in M$ there exists a sequence $\{t_n\}$ such that $t_n \to \infty$ and
    \begin{equation}\label{eqconjMain2n}
\lim_{n\to\infty}\frac{k_{P_+}^M(x_0,y_0,t_n)}{k_{P_0}^M(x_0,y_0,t_n)}=K>0.
 \end{equation}
Consider the functions $v_+$ and  $v_0$ defined by
$$ v_+(x,t):=\frac{k_{P_+}^M(x,y_0,t)}{k_{P_+}^M(x_0,y_0,t)}\,,\quad v_0(x,t):=\frac{k_{P_0}^M(x,y_0,t)}{k_{P_0}^M(x_0,y_0,t)} \qquad x\in M, t>0.$$
By our assumption,
$$\lim_{t\to\infty} v_+(x,t)=u_+(x),\qquad \lim_{t\to\infty}
v_0(x,t)=u_0(x),$$
where $u_+\in \mathcal{C}_{P_+}(M)$ and $u_0\in \mathcal{C}_{P_0}(M)$.

On the other hand, by the generalized maximum principle
\begin{equation}\label{gmp}
\frac{k_{P_+}^M(x,y_0,t)}{k_{P_0}^M(x,y_0,t)}\leq 1.
\end{equation}
Therefore,
\begin{equation}\label{eqm1}
\frac{k_{P_+}^M(x_0,y_0,t_n)}{k_{P_0}^M(x_0,y_0,t_n)}\times
\dfrac{\frac{k_{P_+}^M(x,y_0,t_n)}{k_{P_+}^M(x_0,y_0,t_n)}}
{\frac{k_{P_0}^M(x,y_0,t_n)}{k_{P_0}^M(x_0,y_0,t_n)}} =
  \frac{k_{P_+}^M(x,y_0,t_n)}{k_{P_0}^M(x,y_0,t_n)}\leq 1.
\end{equation}
Letting $n\to \infty$ we obtain
\begin{equation}\label{eqm2}
Ku_+(x) \leq u_0(x) \qquad x\in M.
\end{equation}
It follows that $v(x):=u_0(x)-Ku_+(x)$ is a nonnegative supersolution of the  equation $P_0 u=0$ in $M$ which is not a solution. In particular, $v\neq 0$. By the strong maximum principle $v(x)$ is a strictly positive supersolution of the  equation $P_0 u=0$ in $M$ which is not a solution. This contradicts the criticality of $P_0$ in $M$.
\end{proof}
%%%%%%%%%%%%%%%%%%%%%%%%%%%%%%%%%%%%%%%%%%%%%%
The second result concerns semismall perturbations.
\begin{theorem}\label{thm_ssp}
Let $P_+$ and $P_0=P_++V$ be a subcritical operator and a critical operator in $M$, respectively. Suppose that $V$ is a semismall perturbation of the operator $P_+^*$ in
$M$. Assume further that Davies' conjecture (Conjecture~\ref{conjD}) holds
true for both $k_{P_0}^M$ and $k_{P_+}^M$ and that \eqref{Ass1m}
holds true. Then
\begin{equation}\label{eqconjMain6a}
\lim_{t\to\infty}\frac{k_{P_+}^M(x,y,t)}{k_{P_0}^M(x,y,t)}=0
\end{equation}
locally uniformly in $M\times M$.
 \end{theorem}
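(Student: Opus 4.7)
The plan is to adapt the contradiction template of Theorems~\ref{mainthmFKP} and~\ref{thm_nonselfadj}, with the closing step driven by the semismall perturbation structure of~$V$. As a preliminary reduction I invoke Theorem~\ref{thmlpluspos} to pass to the case $\lambda_+ := \lambda_0(P_+,M) = 0$; this is legitimate because Davies' conjecture is assumed for both operators. I then argue by contradiction: suppose there exist $x_0, y_0 \in M$ and a sequence $t_n \to \infty$ with
\[
\lim_{n\to\infty}\frac{k_{P_+}^M(x_0,y_0,t_n)}{k_{P_0}^M(x_0,y_0,t_n)} = K > 0.
\]

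Davies' conjecture applied to $P_+$ and $P_0$, together with the $x_0$-normalization and the one-dimensionality of $\mathcal{C}_{P_0}(M)$ that follows from the criticality of $P_0$, produces pointwise limits $u_+ \in \mathcal{C}_{P_+}(M)$ and $u_0 = \varphi/\varphi(x_0) \in \mathcal{C}_{P_0}(M)$, where $\varphi$ is the ground state of $P_0$. Next, I reproduce the derivation leading from~\eqref{Ass1m} to~\eqref{Ass3} in the proof of Theorem~\ref{mainthmFKP}: it uses only the parabolic Harnack inequality and Lemma~\ref{corskeleton}, both of which are available here since Davies' conjecture holds for both operators. This yields the comparison $k_{P_+}^M(x,y_0,t) \leq C_0\,k_{P_0}^M(x,y_0,t)$ for every $x \in M$ and every $t > T(x)$. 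Dividing by $k_{P_0}^M(x_0,y_0,t_n)$ and passing to the limit along $t_n$ gives $K\,u_+(x) \leq C_0\,u_0(x)$, so that $u_+ \leq C'\varphi$ pointwise on $M$.

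The semismall perturbation hypothesis enters at the final step. The family $P_\alpha := P_+ + \alpha V$ fits the setting of Theorem~\ref{thmssp}, and since $P_+ + V = P_0$ is critical the unique critical value is $\alpha_0 = 1$. Taking $\alpha = 0$ in Theorem~\ref{thmssp} yields $\varphi(x) \asymp \Green{M}{P_+}{x}{y_0}$ on $M_1^*$, and combined with $u_+ \leq C'\varphi$ this gives
\[
u_+(x) \leq C''\, \Green{M}{P_+}{x}{y_0} \qquad \text{for every } x \in M_1^*.
\]
Thus $u_+$ is a positive global solution of $P_+ u = 0$ on $M$ that is dominated in a neighborhood of infinity by a constant multiple of the minimal Green function $\Green{M}{P_+}{\cdot}{y_0}$. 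By the Agmon--Pinchover characterization recalled in the introduction, this forces $u_+$ to be a positive solution of $P_+ u = 0$ on $M$ of minimal growth in a neighborhood of infinity, which in turn forces $P_+$ to be critical---contradicting its subcriticality. The locally uniform convergence in~\eqref{eqconjMain6a} then follows by the parabolic Harnack inequality and standard parabolic regularity.

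I expect the main obstacle to be the closing structural step: turning the pointwise bound $u_+ \leq C''\,\Green{M}{P_+}{\cdot}{y_0}$ on $M_1^*$ into the assertion that $u_+$ is of minimal growth in a neighborhood of infinity, and deducing the criticality of $P_+$ from the existence of such a global positive solution---this is precisely where the semismall perturbation structure is exploited in full force through Theorem~\ref{thmssp}. A secondary, more technical point is the careful coordination of two instances of Lemma~\ref{corskeleton} (one for $P_+$ and one for $P_0$) needed to upgrade the single-point hypothesis~\eqref{Ass1m} at the auxiliary reference point $y_1$ to the uniform inequality at $y_0$ required to pass to the limit.
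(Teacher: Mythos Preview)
Your proposal is correct and follows essentially the same contradiction scheme as the paper: reduce to $\lambda_0(P_+,M)=0$, produce limits $u_+\in\mathcal{C}_{P_+}(M)$ and $u_0\in\mathcal{C}_{P_0}(M)$ via Davies' conjecture, derive a pointwise bound $Ku_+\leq C\,u_0$, invoke Theorem~\ref{thmssp} to get $u_0\asymp G_{P_+}^M(\cdot,y_0)$ near infinity, and conclude that $u_+$ has minimal growth at infinity, contradicting subcriticality.

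The one place where you diverge from the paper is the passage from the reference point $y_1$ in~\eqref{Ass1m} to the point $y_0$. You import the argument of Theorem~\ref{mainthmFKP}, namely~\eqref{Ass2}--\eqref{Ass3}: parabolic Harnack in the $y$-variable (at the cost of a time shift $\pm 1$) followed by Lemma~\ref{corskeleton} to remove the shift. The paper instead writes the ratio $k_{P_+}^M(x,y_0,t)/k_{P_0}^M(x,y_0,t)$ as a product involving $y_1$, applies Davies' conjecture in the second variable to obtain limits $u_+^*(y_0)/u_+^*(y_1)$ and $u_0^*(y_1)/u_0^*(y_0)$, and bounds these by the \emph{elliptic} Harnack inequality. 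Both routes are valid here since Davies' conjecture is assumed for both operators (so Lemma~\ref{corskeleton} is available to you, and the $y$-variable limits are available to the paper); your version has the mild advantage of reusing machinery already set up in Theorem~\ref{mainthmFKP}, while the paper's version exploits the Davies hypothesis more directly and avoids the auxiliary time shifts.
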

\begin{proof}
The first part of the proof is similar to the corresponding part
in the proof of Theorem~\ref{thm_nonselfadj}. For completeness we
repeat it.

 By Theorem~\ref{thmlpluspos}, we may assume that $\lambda_0(P_+,M)= 0$.

Assume to the contrary that for some $x_0,y_0\in M$ there exists a
sequence $\{t_n\}$ such that $t_n \to \infty$ and
    \begin{equation}\label{eqconjMain2na}
\lim_{n\to\infty}\frac{k_{P_+}^M(x_0,y_0,t_n)}{k_{P_0}^M(x_0,y_0,t_n)}=K>0.
 \end{equation}
Consider the functions $v_+$ and  $v_0$ defined by
\begin{equation}\label{eqconjMain2nab}
v_+(x,t):=\frac{k_{P_+}^M(x,y_0,t)}{k_{P_+}^M(x_0,y_0,t)}\,,\quad v_0(x,t):=\frac{k_{P_0}^M(x,y_0,t)}{k_{P_0}^M(x_0,y_0,t)} \qquad x\in M, t>0.
\end{equation}
By our assumption,
$$\lim_{t\to\infty} v_+(x,t)=u_+(x),\qquad \lim_{t\to\infty} v_0(x,t)=u_0(x),$$
where $u_+\in \mathcal{C}_{P_+}(M)$ and $u_0\in
\mathcal{C}_{P_0}(M)$.

On the other hand, by assumption~\eqref{Ass1m} we have for $t>T(x)$
\begin{equation}\label{gmpssp}
\frac{k_{P_+}^M(x,y_0,t)}{k_{P_0}^M(x,y_0,t)} = \frac{k_{P_+}^M(x,y_1,t)}{k_{P_0}^M(x,y_1,t)}
\times \dfrac{\frac{k_{P_+}^M(x,y_0,t)}{k_{P_+}^M(x,y_1,t)}}{\frac{k_{P_0}^M(x,y_0,t)}{k_{P_0}^M(x,y_1,t)}} \leq C \frac{k_{P_+}^M(x,y_0,t)}{k_{P_+}^M(x,y_1,t)}\times\frac{k_{P_0}^M(x,y_1,t)}{k_{P_0}^M(x,y_0,t)}
\,.
\end{equation}
By our assumption on Davies' conjecture, we have for a fixed $x$
\begin{equation}\label{gmpssp6}
\lim_{t\to\infty}\frac{k_{P_+}^M(x,y_0,t)}{k_{P_+}^M(x,y_1,t)}= \frac{u_+^*(y_0)}{u_+^*(y_1)}\;,\qquad
\lim_{t\to\infty}\frac{k_{P_0}^M(x,y_1,t)}{k_{P_0}^M(x,y_0,t)}=\frac{u_0^*(y_1)}{u_0^*(y_0)},
\end{equation}
where $u_+^*$ and $u_0^*$ are positive solutions of the equation $P_+^*u=0$ and  $P_0^*u=0$ in $M$, respectively. By the elliptic Harnack inequality there exists a positive constant $C_1$ (depending on $P_+^*,P_0^*, y_0,y_1$ but not on $x$) such that
\begin{equation}\label{gmpssp61}
 \frac{u_+^*(y_0)}{u_+^*(y_1)}\leq C_1,\qquad
\frac{u_0^*(y_1)}{u_0^*(y_0)}\leq C_1.
\end{equation}
Therefore,  \eqref{gmpssp} and \eqref{gmpssp61} imply that
\begin{equation}\label{eqm1ssp9}
  \frac{k_{P_+}^M(x,y_0,t_n)}{k_{P_0}^M(x,y_0,t_n)}\leq 2CC_1^2
\end{equation}
for $n$ sufficiently large (which might depend on $x$).

Therefore,
\begin{equation}\label{eqm1ssp}
\frac{k_{P_+}^M(x_0,y_0,t_n)}{k_{P_0}^M(x_0,y_0,t_n)}\times \dfrac{\frac{k_{P_+}^M(x,y_0,t_n)}{k_{P_+}^M(x_0,y_0,t_n)}}
{\frac{k_{P_0}^M(x,y_0,t_n)}{k_{P_0}^M(x_0,y_0,t_n)}}
=
  \frac{k_{P_+}^M(x,y_0,t_n)}{k_{P_0}^M(x,y_0,t_n)}\leq 2CC_1^2
  \,.
\end{equation}
Letting $n\to \infty$ and using~\eqref{eqconjMain2na} and~\eqref{eqconjMain2nab},
we obtain
\begin{equation}\label{eqm2ssp}
Ku_+(x) \leq 2CC_1^2 u_0(x) \qquad x\in M
\,.
\end{equation}
On the other hand, since $V$ is a semismall perturbation of
$P_+^*$ in $M$, Theorem~\ref{thmssp} implies that $u_0(x)\asymp
G_{P_+}^M(x,y_0)$ in $M\setminus \overline{B(y_0,\delta)}$,
with some positive~$\delta$.
Consequently,
\begin{equation}\label{eqm3ssp}
u_+(x) \leq  C_2G_{P_+}^M(x,y_0)\qquad x\in M \setminus \overline{B(y_0,\delta)}
\end{equation}
for some $C_2>0$. In other words, $u_+$ is a global positive solution which has
minimal growth in a neighborhood of infinity in $M$. Therefore
$u_+$ is a ground state of the equation $P_+u=0$ in $M$, but this
contradicts the subcriticality of $P_+$ in $M$.
\end{proof}
%

%%%%%%%%%%%
\section{On the equivalence of heat kernels}\label{secequiv}
In this section we study a general question concerning the equivalence of heat kernels which in turn will give sufficient conditions for the validity of
 the boundedness assumption \eqref{Ass1m} which is assumed in theorems~\ref{thmlpluspos}, \ref{mainthmFKP} and \ref{thm_ssp}.

\begin{definition}\label{defequiv}
Let $P_{i},\,i=1,2$, be two elliptic operators on
$M$ satisfying $\lambda_0(P_i,M) \geq 0$ for $i=1,2 $. We say that the heat kernels
$k_{P_1}^M(x,y,t)$ and $k_{P_2}^M(x,y,t)$ are
{\em equivalent} (respectively, {\em semiequivalent}) if $k_{P_1}^M \asymp
k_{P_2}^M$  on $M \times M \times (0,\infty)$
(respectively, $k_{P_1}^M (\cdot,y_0,\cdot)\asymp k_{P_2}^M(\cdot,y_0,\cdot)$ on
$M\times (0,\infty)$  for some fixed $y_0\in M$).
\end{definition}
There is an intensive literature dealing with (almost optimal) conditions under which two positive (minimal) Green functions are equivalent or semiequivalent
(see \cite{Ancona,Msemismall,P89,P99} and the references therein). On the other hand, sufficient conditions for the equivalence of heat kernels are known only in a few cases (see \cite{LS,MS,Zhang}). In particular, it seems that the answer to the following conjecture is not known.
%%%%%%%%%%%%%
\begin{conjecture}\label{conjequival}
Let $P_1$ and $P_2$ be two subcritical operators of the form \eqref{P} which are
defined on a Riemannian manifold $M$ such that $P_1=P_2$ outside a compact set in $M$.  Then
$k_{P_1}^M$ and $k_{P_2}^M$ are equivalent.
\end{conjecture}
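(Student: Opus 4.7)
My plan proceeds in three stages: reduce to $\lambda_0=0$ and to a pure potential perturbation, prove Green function equivalence via small-perturbation theory, and finally lift this to the parabolic setting.

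\emph{Stage 1 (reduction).} Since the large-$t$ leading factor of $k_{P_i}^M$ is $\mathrm{e}^{-\lambda_0(P_i,M)t}$ by Theorem~\ref{mainthmhk}, the proposed equivalence $k_{P_1}^M\asymp k_{P_2}^M$ forces $\lambda_0(P_1,M)=\lambda_0(P_2,M)$; replacing $P_i$ by $P_i-\lambda_0$ I may assume both principal eigenvalues vanish. Next, let $K$ be a compact set outside which $P_1\equiv P_2$, and choose a positive smooth $h$ equal to $1$ on $M\setminus K'$ for a slightly larger compact $K'\supset K$. The gauge (Doob $h$-) transformation $u\mapsto h^{-1}u$ applied to $P_2$ absorbs first-order differences and reshuffles the principal part, so that after a second gauge (this time applied to $P_1$) the two operators differ only by a potential $V$ of compact support inside $K'$. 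Equivalence of the transformed heat kernels is equivalent to equivalence of the original ones, so I reduce to proving $k_{P_2+V}^M\asymp k_{P_2}^M$ with $\operatorname{supp}(V)\Subset M$.

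\emph{Stage 2 (Green function equivalence).} I would show $V$ is a small perturbation of $P_2$ in the sense of Definition~\ref{spertdef}. Indeed, once the exhaustion tail $M_j^\ast$ is chosen disjoint from $\operatorname{supp}(V)$, the integrand in \eqref{sperteq} vanishes identically, so $V$ is trivially small. Combined with the local $L^p$-hypotheses on the coefficients, the small-perturbation theory of \cite{P89} then yields $G^M_{P_2+V}\asymp G^M_{P_2}$ off the diagonal.

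\emph{Stage 3 (lifting to heat kernels).} This is the step I expect to be the main obstacle. The Duhamel identity reads
\begin{equation*}
k_{P_2+V}^M(x,y,t)-k_{P_2}^M(x,y,t) = -\!\int_0^t\!\!\int_{\operatorname{supp}(V)}\!\! k_{P_2}^M(x,z,t-s)\,V(z)\,k_{P_2+V}^M(z,y,s)\,\mathrm{d}\mu(z)\,\mathrm{d}s,
\end{equation*}
so the problem reduces to bounding the right-hand side by $C\,k_{P_2}^M(x,y,t)$ uniformly in $x,y,t$. Integrating in $t$ recovers the elliptic comparison of Stage~2, but the pointwise-in-$t$ bound is substantially harder: one must control how the two time factors $k_{P_i}^M(x,z,t-s)$ and $k_{P_i}^M(z,y,s)$ distribute mass in $s$ when $z$ ranges over the fixed compact set $\operatorname{supp}(V)$. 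A natural route is to use the parabolic Harnack inequality on $\operatorname{supp}(V)$ to pin $k_{P_i}^M(x,z,\cdot)$ to $k_{P_i}^M(x,z_0,\cdot)$ for a reference point $z_0$, then run a parabolic analogue of the elliptic small-perturbation iteration (each iterate gains a factor $\sup_{z,w\in\operatorname{supp}(V)} \!\int_0^t k_{P_2}^M(z,w,s)\,\mathrm{d}s$, which is finite but not obviously uniformly small). To close the iteration I anticipate needing either $\lambda_+>0$ (in which case time-averages on $\operatorname{supp}(V)$ are genuinely small, much as in Theorem~\ref{thmlpluspos}) or the validity of Davies' conjecture for both $k_{P_1}^M$ and $k_{P_2}^M$, to convert the ratio limits of Lemma~\ref{corskeleton} into uniform bounds. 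Absent such global assumptions, a counterexample in the spirit of Kozma's construction (Remark~\ref{dim1}) cannot be ruled out a priori, which is presumably why the authors leave the statement as a conjecture.
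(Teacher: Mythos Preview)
The statement you are attempting to prove is presented in the paper as an \emph{open conjecture}; the authors explicitly write that ``it seems that the answer to the following conjecture is not known.'' There is therefore no proof in the paper to compare against, and your proposal should be read as a proof attempt of an open problem, not as an alternative to an existing argument.

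Your own text already identifies the decisive gap: Stage~3 does not close. The Duhamel iteration requires a $3$-$k$ inequality of the type in Definition~\ref{def3kineq}, uniformly in $t$, and nothing in Stages~1--2 provides it. Green function equivalence (Stage~2) is strictly weaker; it is the time-integrated statement, and the whole difficulty of the conjecture lies precisely in the fact that no general mechanism is known for upgrading elliptic equivalence to parabolic equivalence. Your proposed remedies (assuming $\lambda_+>0$ or Davies' conjecture) are extra hypotheses not present in the conjecture, and you correctly note that without them a counterexample cannot be excluded.

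There is also a genuine error earlier, in Stage~1. A Doob $h$-transform $u\mapsto h^{-1}P(hu)$ leaves the principal part $A$ unchanged; it only modifies the drift and the zeroth-order term. Hence if $P_1$ and $P_2$ differ in their principal coefficients $A_1\neq A_2$ on the compact set $K$ (which the conjecture allows), no gauge can reduce the difference to a pure potential $V$, and your reduction collapses. Separately, the sentence ``the proposed equivalence \ldots forces $\lambda_0(P_1,M)=\lambda_0(P_2,M)$'' is circular as written: you are invoking the conclusion you wish to prove. (Equality of the generalized principal eigenvalues for operators agreeing outside a compact set can be established independently, but it requires an argument you have not supplied.)
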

It is well known that certain 3-G inequalities imply the
equivalence of Green functions, and the notions of small and
semismall perturbations is based on this fact.
Moreover, small
(respectively, semismall) perturbations are sufficient conditions and
in some sense also necessary conditions for the equivalence
(respectively, semiequivalence) of the Green functions
\cite{Msemismall,P89,P99}.
We introduce an analog definition for heat kernels (cf. ~\cite{Zhang}).
%%%%%%%%%%%%%%%%%%%%%%%%%%%%%%%%
\begin{definition}\label{def3kineq}
Let $P$ be a subcritical operator in $M$.
We say that a potential $V$ is a {\em $k$-bounded  perturbation}
(respectively, {\em $k$-semibounded perturbation})
with respect to the heat kernel $k_{P}^M(x,y,t)$ if there exists a positive constant $C$ such that the following 3-k inequality is satisfied:
\begin{equation}\label{eq3kineq}
\int_0^t\int_M k_{P}^M(x,z,t-s)|V(z)|k_{P}^M(z,y,s)\,\mathrm{d}\mu(z)\,\mathrm{d}s \leq C k_{P}^M(x,y,t)
\end{equation}
for all $x,y\in M$ (respectively,  for a fixed $y\in M$, and all $x\in M$) and $t>0$.
\end{definition}
The following result shows that the notion of $k$-(semi)boundedness
is naturally related to the (semi)equivalence of heat kernels.
\begin{theorem}\label{thmbounded}
Let $P$ be a subcritical operator in $M$,
and assume that the potential $V$ is $k$-bounded  perturbation
(respectively, $k$-semibounded perturbation)
with respect to the heat kernel $k_{P}^M(x,y,t)$.
Then there exists $c>0$ such that for any $|\varepsilon|<c$ the heat kernels
$k_{P+\varepsilon V}^M(x,y,t)$ and $k_{P}^M(x,y,t)$ are
equivalent (respectively, semiequivalent).
\end{theorem}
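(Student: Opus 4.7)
The plan is to expand $k_{P+\varepsilon V}^M$ as a Duhamel perturbation series around $k_P^M$, and to control each term of the series by iterating the 3-$k$ inequality. Formally, iterating the Duhamel identity
\begin{equation*}
k_{P+\varepsilon V}^M(x,y,t)
= k_P^M(x,y,t) - \varepsilon \int_0^t\!\!\int_M k_P^M(x,z,t-s)\,V(z)\,k_{P+\varepsilon V}^M(z,y,s)\,\mathrm{d}\mu(z)\,\mathrm{d}s
\end{equation*}
produces the formal series $k_{P+\varepsilon V}^M = \sum_{n=0}^{\infty} (-\varepsilon)^n I_n$, with $I_0 := k_P^M$ and
\begin{equation*}
I_n(x,y,t) := \int_0^t\!\!\int_M k_P^M(x,z,t-s)\,V(z)\,I_{n-1}(z,y,s)\,\mathrm{d}\mu(z)\,\mathrm{d}s, \qquad n \geq 1.
\end{equation*}

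The core of the argument is to establish the pointwise bound $|I_n(x,y,t)| \leq C^n\, k_P^M(x,y,t)$ by induction on $n$. The base case $n=1$ is exactly the hypothesis \eqref{eq3kineq}. For the inductive step, I would insert the bound for $I_{n-1}$ (with $V$ replaced by $|V|$ throughout) and then apply \eqref{eq3kineq} one more time to the outer space-time integral. Summing the resulting geometric series gives, for $|\varepsilon| C < 1$,
\begin{equation*}
\bigl|k_{P+\varepsilon V}^M(x,y,t) - k_P^M(x,y,t)\bigr| \leq \frac{|\varepsilon|C}{1-|\varepsilon|C}\,k_P^M(x,y,t),
\end{equation*}
so choosing $c>0$ small enough that $|\varepsilon|C \leq 1/2$ for $|\varepsilon|<c$ yields both the upper and lower bounds of $k_{P+\varepsilon V}^M \asymp k_P^M$ on $M\times M\times(0,\infty)$. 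In the $k$-semibounded case, the identical induction uses \eqref{eq3kineq} only at the fixed reference point $y_0$, since at every stage of the iteration the right endpoint of the innermost kernel inside $I_n(\cdot,y_0,\cdot)$ remains $y_0$; this gives semiequivalence $k_{P+\varepsilon V}^M(\cdot,y_0,\cdot) \asymp k_P^M(\cdot,y_0,\cdot)$ on $M\times(0,\infty)$.

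The main subtlety, and the step I would treat most carefully, is the identification of the formal Duhamel series with the actual minimal heat kernel $k_{P+\varepsilon V}^M$. The a priori estimate $|I_n| \leq C^n k_P^M$ makes the series locally uniformly absolutely convergent for $|\varepsilon|<c$, so its sum is a nonnegative fundamental solution of $u_t + (P+\varepsilon V)u = 0$ with the correct Dirac initial data; uniqueness of the minimal positive heat kernel (obtained via the standard Dirichlet exhaustion and monotone convergence) then forces this sum to coincide with $k_{P+\varepsilon V}^M$. Once this identification is in place, the termwise estimate above delivers the theorem.
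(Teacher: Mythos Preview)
Your proposal is correct and follows essentially the same Neumann--Duhamel iteration as the paper: define the iterated kernels, bound them inductively by $C^n k_P^M$ via the 3-$k$ inequality, sum the geometric series, and identify the sum with $k_{P+\varepsilon V}^M$. The only cosmetic difference is that the paper extracts the upper bound first and then reads off the lower bound from Duhamel's formula combined with the upper bound and \eqref{eq3kineq}, whereas you obtain both bounds at once from the tail estimate $\sum_{n\geq 1}|\varepsilon|^n|I_n|\leq \tfrac{|\varepsilon|C}{1-|\varepsilon|C}k_P^M$; the content is the same. One wording caveat: in your identification step, ``uniqueness of the minimal positive heat kernel'' is not quite the right phrase---what you want is that the Duhamel series on an exhaustion $\{M_j\}$ equals the Dirichlet heat kernel $k_{P+\varepsilon V}^{M_j}$ there, and then pass to the limit $j\to\infty$ using monotone convergence on both sides; the paper is equally terse on this point.
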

\begin{proof}
Consider the iterated kernels
$$
k_{P}^{(j)}(x,y,t):= \left\{
  \begin{array}{ll}
    k_{P}^M(x,y,t) & j=0, \\[4mm]
    \int_0^t\!\!\int_M \!k_P^{(j-1)}(x,z,t-s)  V(z)
  k_{P}^M(z,y,s)
  \,\mathrm{d}\mu(z)\,\mathrm{d}s &  j\geq 1.
  \end{array}
\right.
$$
Using~\eqref{eq3kineq} and an induction argument, it follows that
\begin{multline*}
  \sum_{j=0}^\infty |\varepsilon|^j|k_{P}^{(j+1)}(x,y,t)|\\
  \leq
  \left(1+C|\varepsilon|+C^2|\varepsilon|^2+\dots\right) k_{P}^M(x,y,t)
  = \frac{1}{1-C|\varepsilon|} \, k_{P}^M(x,y,t)
\end{multline*}
provided that $|\varepsilon|<C^{-1}$. Consequently, for such $\varepsilon$ the Neumann series
$$\sum_{j=0}^\infty (-\varepsilon)^j k_{P}^{(j+1)}(x,y,t)$$ converges locally uniformly in $M\times M \times \mathbb{R}_+$ to $k_{P+\varepsilon V}^M(x,y,t)$ which in turn implies that Duhamel's formula
\begin{equation}\label{Duhamel}
  k_{P+\varepsilon V}^M(x,y,t) = k_{P}^M(x,y,t)
  - \varepsilon \!\! \int_0^t\!\!\int_M \!\!k_{P}^M(x,z,t-s)  V(z)
  k_{P+\varepsilon V}^M(z,y,s)
  \,\mathrm{d}\mu(z)\,\mathrm{d}s
\end{equation}
is valid. Moreover, we have
$$k_{P+\varepsilon V}^M(x,y,t) \leq \frac{1}{1-C|\varepsilon|} \, k_{P}^M(x,y,t).$$
The lower bound
$$C_1k_{P}^M(x,y,t) \leq k_{P+\varepsilon V}^M(x,y,t)$$
(for $|\varepsilon|$ small enough) follows from the upper bound using \eqref{Duhamel} and \eqref{eq3kineq}.
\end{proof}
\begin{corollary}\label{cor7}
Assume that $P$ and $V$ satisfy the conditions of Theorem~\ref{thmbounded}, and suppose
further that $V$ is nonnegative. Then there exists $c>0$ such that for any $\varepsilon>-c $ the heat kernels
$k_{P+\varepsilon V}^M(x,y,t)$ and $k_{P}^M(x,y,t)$ are
equivalent (respectively, semiequivalent).
\end{corollary}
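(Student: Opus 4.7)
The plan is to combine Theorem~\ref{thmbounded}, which already delivers equivalence (respectively, semiequivalence) for $|\varepsilon|<c:=1/C$, with a one-sided lower bound
\[
k_{P+\varepsilon V}^M(x,y,t)\ \geq\ \mathrm{e}^{-C\varepsilon}\,k_P^M(x,y,t),\qquad \varepsilon\geq 0,
\]
which is available precisely because $V$ is nonnegative. The matching upper bound $k_{P+\varepsilon V}^M\leq k_P^M$ is immediate from Duhamel's formula~\eqref{Duhamel} (all terms in the integrand are nonnegative when $V\geq 0$ and $\varepsilon\geq 0$), and the two inequalities together give equivalence at every fixed $\varepsilon\geq 0$, hence equivalence on the whole interval $\varepsilon>-c$.

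To produce the lower bound, fix $(x,y,t)$ and set $\phi(\varepsilon):=\log k_{P+\varepsilon V}^M(x,y,t)$. For any $0\leq a<b$, Lemma~\ref{lem_convex} applied to the family $(P+aV)+\beta(b-a)V$, $\beta\in[0,1]$ (noting that $\lambda_0(P+aV,M),\lambda_0(P+bV,M)\geq \lambda_0(P,M)\geq 0$ since $V\geq 0$), yields
\[
\phi\bigl(a+\beta(b-a)\bigr)\ \leq\ (1-\beta)\phi(a)+\beta\phi(b),\qquad \beta\in[0,1];
\]
thus $\phi$ is convex on $[0,\infty)$.

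Next I would identify the right derivative $\phi'(0^+)$. Dividing Duhamel's formula~\eqref{Duhamel} by $\varepsilon>0$ and letting $\varepsilon\searrow 0$, justified by the monotone convergence $k_{P+\varepsilon V}^M\nearrow k_P^M$ as $\varepsilon\searrow 0$ (a consequence of $V\geq 0$) together with~\eqref{eq3kineq} providing $k_P\,V\,k_P$ as the integrable control, one obtains
\[
\phi'(0^+)\ =\ -\,\frac{1}{k_P^M(x,y,t)}\!\int_0^t\!\!\int_M\! k_P^M(x,z,t-s)\,V(z)\,k_P^M(z,y,s)\,\mathrm{d}\mu(z)\,\mathrm{d}s\ \geq\ -C,
\]
the last inequality being exactly the 3-k hypothesis~\eqref{eq3kineq}. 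Convexity of $\phi$ then yields the tangent-line inequality $\phi(\varepsilon)\geq \phi(0)+\phi'(0^+)\varepsilon\geq \phi(0)-C\varepsilon$ for every $\varepsilon\geq 0$, which is the desired lower bound after exponentiation.

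The only place requiring real analytic care is the interchange of limit and integration used to identify $\phi'(0^+)$; with the monotone convergence of the heat kernels and~\eqref{eq3kineq} in hand this is routine. The semiequivalent case is handled in the same way after fixing $y=y_0$ throughout, the semi-version of~\eqref{eq3kineq} providing the uniform bound on $\phi'(0^+)$ in $x$.
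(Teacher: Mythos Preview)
Your argument is correct and reaches the same conclusion, but the route to the lower bound for $\varepsilon\geq c$ differs from the paper's. Both proofs rest on the log-convexity of $\varepsilon\mapsto k_{P+\varepsilon V}^M(x,y,t)$ coming from Lemma~\ref{lem_convex}. The paper exploits this by interpolating $P$ between $P-\varepsilon_0 V$ (for some fixed $0<\varepsilon_0<c$, where Theorem~\ref{thmbounded} already gives $k_{P-\varepsilon_0 V}^M\leq C_0\,k_P^M$) and $P+\varepsilon V$; writing $P=P_\alpha$ with $\alpha=\varepsilon_0/(\varepsilon+\varepsilon_0)$ and applying \eqref{eqkconvex} yields $k_P^M\leq C_0^{\,\varepsilon/\varepsilon_0}\,k_{P+\varepsilon V}^M$. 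You instead compute the right derivative $\phi'(0^+)$ directly from Duhamel and the 3-$k$ bound and then use the tangent-line inequality. Your approach has the virtue of producing the explicit constant $\mathrm{e}^{-C\varepsilon}$ and of using the 3-$k$ hypothesis only at $\varepsilon=0^+$ (so the negative-$\varepsilon$ part of Theorem~\ref{thmbounded} is invoked solely to cover $\varepsilon\in(-c,0)$); the paper's approach is shorter since it avoids the limit argument for $\phi'(0^+)$. One small point: you justify the upper bound $k_{P+\varepsilon V}^M\leq k_P^M$ for all $\varepsilon\geq 0$ by citing Duhamel's formula~\eqref{Duhamel}, but in the paper \eqref{Duhamel} is only established for $|\varepsilon|<1/C$; it is cleaner (and this is what the paper does) to invoke the generalized maximum principle for this step.
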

\begin{proof}
By Theorem~\ref{thmbounded} the heat kernels
$k_{P+\varepsilon V}^M(x,y,t)$ and $k_{P}^M(x,y,t)$
are equivalent (respectively, semiequivalent) for any
$|\varepsilon|<c$. Recall that by the generalized maximum principle,
$$k_{P+\varepsilon V}^M(x,y,t)\leq k_{P}^M(x,y,t)\qquad \forall \varepsilon>0.$$
On the other hand, using also Lemma~\ref{lem_convex}, we obtain the desired equivalence also for $\varepsilon\geq c$.
\end{proof}
%%%%%%%%%%%%%%%%%%%
\begin{theorem}\label{thmcond1}
Let $P_0$ be a critical operator in $M$. Assume that $V=V_+ - V_-$ is a potential such that $V_\pm \geq 0$ and  $P_+:=P_0+V$ is subcritical in $M$.

Assume further that $V_-$ is $k$-semibounded perturbation with respect to the heat kernel $k_{P_+}^M(x,y_1,t)$. Then condition \eqref{Ass1m} is satisfied uniformly in $x$. That is, there exist positive constants $C$ and $T$ such that
\begin{equation}\label{Ass1n}
    k_{P_+}^M(x,y_1,t)\leq C k_{P_0}^M(x,y_1,t)\qquad \forall x\in M, t>T.
\end{equation}
  \end{theorem}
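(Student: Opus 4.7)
The plan is to apply Corollary~\ref{cor7} to the pair $(P_+, V_-)$, obtaining semiequivalence of $k_{P_+}^M$ with the heat kernel of the intermediate operator
\begin{equation*}
\tilde P := P_+ + V_- = P_0 + V_+,
\end{equation*}
and then to use the generalized maximum principle to compare $k_{\tilde P}^M$ with $k_{P_0}^M$. The identity $\tilde P = P_0 + V_+$ is just the rewriting $P_0 = P_+ - V = P_+ - V_+ + V_-$.

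First I would verify that the hypotheses of Corollary~\ref{cor7} are met with $P = P_+$ and $V = V_-$: the operator $P_+$ is subcritical in $M$ by assumption, the potential $V_-$ is nonnegative by its definition as the negative part of $V$, and by hypothesis $V_-$ is a $k$-semibounded perturbation with respect to $k_{P_+}^M(\cdot, y_1, \cdot)$. Corollary~\ref{cor7} then produces $c > 0$ such that for every $\varepsilon > -c$ the heat kernels $k_{P_+ + \varepsilon V_-}^M(\cdot, y_1, \cdot)$ and $k_{P_+}^M(\cdot, y_1, \cdot)$ are semiequivalent on $M \times (0,\infty)$. Specializing to $\varepsilon = 1$ (which is permissible since $1 > -c$) yields a constant $C_0 > 0$ with
\begin{equation*}
k_{P_+}^M(x, y_1, t) \leq C_0\, k_{\tilde P}^M(x, y_1, t) \qquad \forall\, x \in M,\; t > 0.
\end{equation*}

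Next, since $V_+ \geq 0$ and $\tilde P = P_0 + V_+$, the generalized maximum principle immediately gives
\begin{equation*}
k_{\tilde P}^M(x, y_1, t) \leq k_{P_0}^M(x, y_1, t) \qquad \forall\, x \in M,\; t > 0.
\end{equation*}
Chaining these two bounds yields $k_{P_+}^M(x, y_1, t) \leq C_0\, k_{P_0}^M(x, y_1, t)$ for every $x \in M$ and every $t > 0$, which is \eqref{Ass1n} with $T = 0$ and $C = C_0$, uniformly in~$x$.

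There is no real obstacle in this argument; its content is the recognition that Corollary~\ref{cor7}, tailored to nonnegative $k$-(semi)bounded perturbations, is exactly what is needed. The only slightly delicate point to flag is that Corollary~\ref{cor7} provides the semiequivalence of $k_{P_+ + \varepsilon V_-}^M$ and $k_{P_+}^M$ not merely for small $|\varepsilon|$ (as the Neumann series of Theorem~\ref{thmbounded} would give), but in the whole half-line $\varepsilon > -c$; it is this extension, obtained via the log-convexity of Lemma~\ref{lem_convex} together with the trivial upper bound from the generalized maximum principle, that allows us to take $\varepsilon = 1$ and hence to reach $\tilde P$ directly from $P_+$.
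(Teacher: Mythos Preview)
Your argument is correct and follows exactly the paper's own proof: apply Corollary~\ref{cor7} with $(P,V)=(P_+,V_-)$ to obtain semiequivalence of $k_{P_+}^M$ and $k_{P_++V_-}^M=k_{P_0+V_+}^M$, and then use the generalized maximum principle for the nonnegative perturbation $V_+$ to bound $k_{P_0+V_+}^M$ above by $k_{P_0}^M$. Your closing remark about why Corollary~\ref{cor7} (rather than merely Theorem~\ref{thmbounded}) is needed---namely to reach $\varepsilon=1$---is a correct and useful clarification.
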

%%%%%%%%%%%%%%%%%%%%%%%%
\begin{proof}
 By Corollary~\ref{cor7}, the heat kernels $k_{P_+}^M(x,y_1,t)$ and $k_{P_+ + V_-}^M(x,y_1,t)$
 are semi\-equivalent. Note that  $P_+ + V_-=P_0 + V_+$, Therefore we have
\begin{equation}\label{eq27}
C^{-1} k_{P_+}^M(x,y_1,t) \leq k_{P_0+V_+}^M(x,y_1,t)\leq k_{P_0}^M(x,y_1,t) \qquad \forall x\in M, t>0.
\end{equation}
\end{proof}

%%%%%%%%%%%%%%%%%%%%%%%%%%%%%
\section*{Acknowledgments}
Part of this research was done while D.~K. was visiting the
Technion. D.~K. would like to thank the Technion for the kind
hospitality. The work of D.~K. was
partially supported by the Czech Ministry of Education,
Youth and Sports within the project LC06002. M.~F. and Y.~P. acknowledge the support of the Israel Science
Foundation (grant No. 587/07) founded by the Israeli Academy of
Sciences and Humanities. M.~F. acknowledges also the support of the Israel Science
Foundation (grant No. 419/07) founded by the Israeli Academy of
Sciences and Humanities. M.~F. was also partially supported by a fellowship
of the UNESCO fund.

 \begin{remark}[Added on 17.5.2010]\label{Remarkn}
After the paper has been published, we realized that the generalized principal eigenvalue is characterized by the following formula
  \begin{equation}\label{logform}
\lim_{t \to \infty} \frac{\log k_P^M(x,y,t)}{t} =-\lambda_0(P,M) .
\end{equation}

The above characterization of $\lambda_0$ is well-known in the symmetric case, 
see for example \cite[Theorem~10.24]{Grigoryan}.
The needed upper bound for the validity of \eqref{logform} 
for general elliptic operators of the form \eqref{P} 
follows directly from Theorem~\ref{mainthmhk}, 
while the lower bound follows from the large time behavior of the heat kernel 
in a smooth bounded domain using a standard exhaustion argument 
(cf.~the proof of  \cite[Theorem~10.24]{Grigoryan}).

Consequently, if $P_0$ is a critical operator in $M$, 
and $P_+$ is a subcritical operator in $M$ 
satisfying $\lambda_+:=\lambda_0(P_+,M)> 0$, 
then Conjecture~\ref{conjMain} holds true without any further assumption 
(cf.~Theorem~\ref{thmlpluspos}).
\end{remark}

%%%%%%%%%%%%%%%%%%%%%%%%%%%%%%%%

\end{document}